\numberwithin{equation}{section}
\numberwithin{figure}{section}
\theoremstyle{definition}
\newtheorem*{defn*}{\protect\definitionname}
\theoremstyle{plain}
\newtheorem{thm}{\protect\theoremname}[section]
\theoremstyle{definition}
\newtheorem{defn}[thm]{\protect\definitionname}
\theoremstyle{plain}
\newtheorem{cor}[thm]{\protect\corollaryname}
\theoremstyle{plain}
\newtheorem{lem}[thm]{\protect\lemmaname}
\DeclareMathOperator{\reg}{reg}
\providecommand{\corollaryname}{Corollary}
\providecommand{\definitionname}{Definition}
\providecommand{\lemmaname}{Lemma}
\providecommand{\theoremname}{Theorem}
\begin{document}
\title{Bounds on regularity of quadratic monomial ideals}
\date{\today}
\author{Grigoriy Blekherman}
\address{School of Mathematics, Georgia Institute of Technology, Atlanta, GA 30332}
\email{greg@math.gatech.edu}
\author{Jaewoo Jung}
\address{School of Mathematics, Georgia Institute of Technology, Atlanta, GA 30332}
\email{jjung325@math.gatech.edu}

\begin{abstract} 
Castelnuovo-Mumford regularity is a measure of algebraic complexity of an ideal. Regularity of monomial ideals can be investigated combinatorially. We use a simple graph decomposition and results from structural graph theory to prove, improve and generalize many of the known bounds on regularity of quadratic square-free monomial ideals.
\end{abstract}
\maketitle
\section{Introduction}

We consider bounds on Castelnuovo-Mumford regularity of a square-free quadratic monomial ideal $I$ over a field of characteristic $0$.
Many recent papers investigated regularity of such ideals 
\cite{MR3070118}\cite{fakhari2017regularity}\cite{MR3213523}\cite{MR2790928}\cite{MR2943752}\cite{MR2563591}, see also \cite{MR2932582} for a survey.
One can associate to a quadratic square-free monomial ideal $I$ a graph $G$, whose vertices are the variables, and edges correspond to quadratic generators of $I$. Therefore, quadratic square-free monomial ideals are often called edge ideals in the literature.

Another popular approach, which we follow, is to associate to $I$ a graph $G(=G(I))$ where quadratic generators of $I$ are the non-edges of $G$. We note that the ideal $I(G)$ is the Stanley-Reisner ideal of the clique complex of $G$ \cite{MR1453579}*{Chapter 2}. We use $\reg (G)$ to denote Castelnuovo-Mumford regularity of the non-edge ideal $I(G)$ of $G$. 

Our main tool for bounding regularity is the following decomposition theorem, which is based on a straightforward application of Hochster's formula \cite{MR0441987}.
\begin{thm}\label{1.mainthm}
	Let $G$ be a graph. Let $G_{1}$ and $G_{2}$ be subgraphs which cover cliques of $G$ (i.e. any clique of $G$ is a clique in either $G_{1}$ or $G_{2}$.) Then, \[\reg(G)\leq\max\{\reg(G_{1}),\reg(G_{2}),\reg(G_{1}\cap G_{2})+1\}\]
\end{thm}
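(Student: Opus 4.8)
The plan is to express regularity in terms of reduced simplicial homology of induced subcomplexes via Hochster's formula \cite{MR0441987} and then to compare the three complexes through a Mayer--Vietoris sequence. Write $\Delta=\Delta(G)$, $\Delta_{1}=\Delta(G_{1})$, $\Delta_{2}=\Delta(G_{2})$ for the clique complexes, viewed as subcomplexes of $\Delta$, and set $\Delta_{12}=\Delta_{1}\cap\Delta_{2}$. First I would invoke Hochster's formula in the form
\[
\reg\bigl(I(H)\bigr)=2+\max\bigl\{\,\ell : \widetilde{H}_{\ell}\bigl(\Delta(H)|_{W};k\bigr)\neq 0\ \text{for some}\ W\subseteq V(H)\,\bigr\},
\]
valid for every graph $H$, with $\Delta(H)|_{W}$ the subcomplex of faces contained in $W$; the precise shift is immaterial, what matters is that it is the \emph{same} for $H=G,G_{1},G_{2},G_{1}\cap G_{2}$. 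So it suffices to prove: whenever $\widetilde{H}_{\ell}(\Delta|_{W};k)\neq 0$, either $\widetilde{H}_{\ell}(\Delta_{i}|_{W};k)\neq 0$ for some $i$, or $\widetilde{H}_{\ell-1}(\Delta_{12}|_{W};k)\neq 0$.

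Next I would check the combinatorial content of the covering hypothesis. A subset $\sigma$ of the vertex set is a face of $\Delta$ exactly when $\sigma$ is a clique of $G$; since every clique of $G$ is a clique of $G_{1}$ or of $G_{2}$ while every clique of $G_{i}$ is a clique of $G$, this says $\Delta=\Delta_{1}\cup\Delta_{2}$. Similarly $\sigma$ is a clique of $G_{1}\cap G_{2}$ iff it is a clique of both $G_{1}$ and $G_{2}$, so $\Delta_{12}=\Delta_{1}\cap\Delta_{2}$ is the clique complex of $G_{1}\cap G_{2}$. Because restriction to a vertex subset commutes with unions and intersections of subcomplexes, for every $W$ one gets $\Delta|_{W}=\Delta_{1}|_{W}\cup\Delta_{2}|_{W}$ and $\Delta_{1}|_{W}\cap\Delta_{2}|_{W}=\Delta_{12}|_{W}$.

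Then I would plug this into the reduced Mayer--Vietoris sequence of the decomposition $\Delta|_{W}=\Delta_{1}|_{W}\cup\Delta_{2}|_{W}$:
\[
\cdots\to\widetilde{H}_{\ell}(\Delta_{1}|_{W})\oplus\widetilde{H}_{\ell}(\Delta_{2}|_{W})\to\widetilde{H}_{\ell}(\Delta|_{W})\xrightarrow{\ \partial\ }\widetilde{H}_{\ell-1}(\Delta_{12}|_{W})\to\cdots
\]
If $\widetilde{H}_{\ell}(\Delta|_{W})\neq 0$ then, by exactness at that spot, either $\partial$ sends it to something nonzero, so $\widetilde{H}_{\ell-1}(\Delta_{12}|_{W})\neq 0$ and hence $\reg(G_{1}\cap G_{2})\ge 2+(\ell-1)$, i.e.\ $\reg(G_{1}\cap G_{2})+1\ge 2+\ell$; or it lifts to the left-hand term, so $\widetilde{H}_{\ell}(\Delta_{1}|_{W})\oplus\widetilde{H}_{\ell}(\Delta_{2}|_{W})\neq 0$ and hence $\max\{\reg(G_{1}),\reg(G_{2})\}\ge 2+\ell$. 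Choosing $W$ and $\ell$ so that $\reg(G)=2+\ell$ then yields the asserted inequality.

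The main obstacle, such as it is, is the bookkeeping in the first step: one has to be careful that the homological-degree-to-regularity dictionary is applied with the same shift to all four graphs, so that the single drop in degree produced by the connecting homomorphism $\partial$ becomes precisely the ``$+1$'' attached to $\reg(G_{1}\cap G_{2})$, and one should treat the low-dimensional degenerate cases (e.g.\ $W=\emptyset$, or $W$ meeting some $V(G_{i})$ trivially) by using the reduced-homology conventions consistently. The Mayer--Vietoris step itself needs no real work, since $\Delta|_{W},\Delta_{1}|_{W},\Delta_{2}|_{W}$ are genuine simplicial complexes and $\Delta_{1}|_{W},\Delta_{2}|_{W}$ genuinely cover $\Delta|_{W}$.
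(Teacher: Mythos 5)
Your proof is correct and follows essentially the same route as the paper's: translate regularity into vanishing of reduced homology of restrictions of the clique complex via Hochster's formula, observe that the covering hypothesis gives $\Delta(G)=\Delta(G_1)\cup\Delta(G_2)$ with intersection $\Delta(G_1\cap G_2)$ compatibly with restriction to vertex subsets, and read the bound off the Mayer--Vietoris sequence. The only cosmetic difference is that you argue by exhibiting a nonvanishing group for one of $G_1$, $G_2$, $G_1\cap G_2$ whenever one exists for $G$, while the paper argues the contrapositive (vanishing for the pieces forces vanishing for $G$); these are the same exactness argument.
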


We first apply this theorem to the case of a separator of $G$, i.e. a subset of vertices of $G$ whose deletion disconnects $G$. For a subgraph $H$ of $G$ we use $G\setminus H$ to denote the induced subgraph on vertices of $G$ that are not in $H$.

\begin{thm}[Cutset/Separator Decomposition]\label{1.Separator}
 Let $T$ be an induced subgraph of $G$ such that $G\setminus T$ is disconnected. Let $C_{1},\dots,C_{k}$ be the connected components of $G\setminus T$. Then, \[\reg(G)\leq\max\{\reg(G_{i})_{i=1,\dots,k},\reg(T)+1\}\] where $G_i$ are the induced subgraphs on vertices of $C_i$ and $T$, for $i=1,\dots,k$.
\end{thm}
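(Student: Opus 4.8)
The plan is to deduce this directly from Theorem~\ref{1.mainthm} by induction on the number $k$ of connected components of $G\setminus T$, with the base case $k=2$ being the heart of the argument.

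For the base case, I would set $G_1$ to be the induced subgraph of $G$ on $V(C_1)\cup V(T)$ and $G_2$ the induced subgraph on $V(C_2)\cup V(T)$. The first thing to check is that $G_1$ and $G_2$ cover the cliques of $G$: a clique $K$ of $G$ is a set of pairwise adjacent vertices, so it cannot meet two distinct components $C_i$ and $C_j$ of $G\setminus T$ (there are no edges of $G$ between them), hence $K\subseteq V(C_i)\cup V(T)$ for a single $i$ and $K$ is a clique of $G_i$. Next I would identify the intersection: since $G_1$ and $G_2$ are induced subgraphs, $G_1\cap G_2$ is the induced subgraph on $V(G_1)\cap V(G_2)=V(T)$, which is exactly $T$. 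Theorem~\ref{1.mainthm} then gives $\reg(G)\le\max\{\reg(G_1),\reg(G_2),\reg(T)+1\}$, which is the $k=2$ case.

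For the inductive step, assume $k\ge 3$ and that the statement holds for fewer components. Let $G'$ be the induced subgraph of $G$ on $V(C_2)\cup\cdots\cup V(C_k)\cup V(T)$. In $G'$ the set $T$ is again a separator whose deletion yields the components $C_2,\dots,C_k$, with associated induced subgraphs $G_2,\dots,G_k$, so the inductive hypothesis gives $\reg(G')\le\max\{\reg(G_i)_{i=2,\dots,k},\reg(T)+1\}$. Applying the base case to $G$ with the separator $T$ and the two sides $C_1$ and $C_2\cup\cdots\cup C_k$ (whose associated induced subgraphs are $G_1$ and $G'$) gives $\reg(G)\le\max\{\reg(G_1),\reg(G'),\reg(T)+1\}$, and combining the two inequalities yields the claimed bound.

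The only genuine subtlety — and the step I would be most careful about — is the clique-covering verification, since that is precisely what licenses the application of Theorem~\ref{1.mainthm}; it rests on the defining feature of a separator, that distinct components of $G\setminus T$ have no edges between them. The remaining ingredients are routine bookkeeping about induced subgraphs and a standard induction.
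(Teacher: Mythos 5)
Your proposal is correct and follows essentially the same route as the paper: both peel off one component at a time, verify that the two induced subgraphs cover all cliques because no edge of $G$ joins distinct components of $G\setminus T$, identify the intersection as $T$, and invoke Theorem~\ref{1.mainthm}; the paper writes the induction as an iterated chain of inequalities rather than a formal induction on $k$, but the content is identical. The only cosmetic point is that in your inductive step you invoke ``the base case'' for the split $C_1$ versus $C_2\cup\cdots\cup C_k$, where the latter is not a single component --- what you are really reusing is the clique-covering argument and Theorem~\ref{1.mainthm} directly, which is fine and is exactly what the paper does.
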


Theorem \ref{1.Separator} generalizes a decomposition result used by Dao, Huneke and Schweig in \cite{MR3070118}*{Lemma 3.1}. Recall that an open neighborhood $N_G(v)$ of a vertex $v$ is the induced subgraph on the vertices adjacent to $v$, and a closed neighborhood $N_G[v]$ of $v$ is the induced subgraph on $v$ and all vertices adjacent to $v$. Decomposition in \cite{MR3070118} arises as a special, but very useful case, where $T$ is the open neighborhood of a vertex $v$. An additional simplification comes from the fact that regularity of the open and closed neighborhoods of $v$ are the same.

\begin{thm}[Vertex Decomposition]\label{1.vertex}
Let $v$ be a vertex of $G$. Then, \[\reg(G)\leq\max\{\reg(G \setminus v),\reg(N_{G}(v))+1\}.\] 
\end{thm}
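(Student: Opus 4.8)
The plan is to specialize Theorem~\ref{1.mainthm} by taking the open neighborhood of $v$ as the separating set; this is exactly the instance of Theorem~\ref{1.Separator} with $T=N_G(v)$, in which $v$ is an isolated vertex of $G\setminus N_G(v)$ and we merge all of the remaining connected components of $G\setminus N_G(v)$ into one piece. Concretely, I would set $G_1:=G\setminus v$ and $G_2:=N_G[v]$, the closed neighborhood of $v$.

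First I would verify that $G_1$ and $G_2$ cover the cliques of $G$: a clique $K$ with $v\notin K$ is a clique of the induced subgraph $G\setminus v=G_1$, while a clique $K$ with $v\in K$ has all of its other vertices adjacent to $v$, hence $K\subseteq N_G[v]$ is a clique of $G_2$. Then I would identify the intersection: a vertex lies in both $V(G_1)$ and $V(G_2)$ exactly when it is a neighbor of $v$, and as both graphs are induced, $G_1\cap G_2$ is the induced subgraph on $N_G(v)$, i.e. $G_1\cap G_2=N_G(v)$. Applying Theorem~\ref{1.mainthm} now yields
\[
\reg(G)\le\max\{\reg(G\setminus v),\ \reg(N_G[v]),\ \reg(N_G(v))+1\}.
\]

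The last step is to discard the middle term using $\reg(N_G[v])=\reg(N_G(v))$: in $N_G[v]$ the vertex $v$ is adjacent to every other vertex, so no non-edge is incident to $v$ and $I(N_G[v])$ differs from $I(N_G(v))$ only by adjoining one extra variable (indexed by $v$) that appears in no generator; since adjoining such a free variable leaves all graded Betti numbers unchanged, $\reg(N_G[v])=\reg(N_G(v))\le\reg(N_G(v))+1$. I would also separately dispatch the degenerate case where $v$ is a universal vertex --- then $G\setminus v=N_G(v)$ and the same free-variable observation gives $\reg(G)=\reg(N_G(v))$, so the inequality is immediate. I expect no serious obstacle here: the only points requiring care are the clique-covering check for cliques containing $v$ and the (standard) invariance of regularity under a free variable, which is precisely the ingredient that collapses the closed neighborhood to the open one.
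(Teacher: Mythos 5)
Your proposal is correct and follows essentially the same route as the paper: the paper deduces the statement from the separator decomposition with $T=N_G(v)$, whose proof is exactly your clique-covering argument with $G_1=G\setminus v$ and $G_2=N_G[v]$, and then collapses $\reg(N_G[v])$ to $\reg(N_G(v))$ using the contractibility of clique complexes containing the cone vertex $v$ --- the same fact as your ``free variable appearing in no generator'' observation. Your separate treatment of the universal-vertex case is a minor extra care the paper omits, but nothing of substance differs.
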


The above decompositions allow us to leverage existing results in structural graph theory results to derive a number of interesting consequences. 
A family of graphs is called \textit{hereditary} if it is closed under vertex deletion \cite{MR3408128}*{Chapter 2}. 
 Recall that a graph is chordal if it does not contain a cycle of length at least four as an induced subgraph \cite{MR2159259}*{Section 5.5}. Chordal graphs form a hereditary family, and moreover in every chordal graph there exists a vertex whose neighborhood is the complete graph \cite{MR2159259}*{Proposition 5.5.1}. 
Therefore, we immediately obtain a result of Fr\"{o}berg that regularity of chordal graphs is at most $2$. With the same idea we obtain the following theorem:

\begin{thm}[Hereditary theorem] \label{thm:hereditary} Let $\mathscr{G}$ be a hereditary family with the following property:
there exists $t\in \mathbb{N}$, such that for any $G \in \mathscr{G}$ there is a separator $G'$ of $G$ with $\reg (G') \leq t$. Then regularity of any $G \in \mathscr{G}$ is at most $t+1$.
\end{thm}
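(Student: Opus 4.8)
The plan is a straightforward strong induction on the number of vertices $n=|V(G)|$, using the Cutset/Separator Decomposition (Theorem~\ref{1.Separator}) for the inductive step. For the base case one may take all graphs on at most one vertex (and, if convenient, all complete graphs), for which $I(G)=0$ and $\reg(G)\le t+1$ is immediate; no real argument is needed there.

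For the inductive step, fix $G\in\mathscr{G}$ on $n$ vertices and assume the bound holds for every member of $\mathscr{G}$ with fewer than $n$ vertices. By hypothesis $G$ has a separator $G'$ with $\reg(G')\le t$. Put $T=G'$; since deleting $T$ disconnects $G$, the hypothesis of Theorem~\ref{1.Separator} is met. Let $C_1,\dots,C_k$ (with $k\ge 2$) be the connected components of $G\setminus T$, and let $G_i$ be the subgraph induced on $V(C_i)\cup V(T)$. Two observations then finish the proof. First, $G_i$ is an induced subgraph of $G$, so $G_i\in\mathscr{G}$ because $\mathscr{G}$ is hereditary. Second, since $k\ge 2$ and every connected component is nonempty, $V(G_i)=V(C_i)\cup V(T)$ omits each vertex of $C_j$ for any $j\ne i$, so $|V(G_i)|<n$; hence the induction hypothesis gives $\reg(G_i)\le t+1$ for all $i$. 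Feeding this together with $\reg(T)=\reg(G')\le t$ into Theorem~\ref{1.Separator} gives
\[
\reg(G)\le\max\bigl\{\reg(G_1),\dots,\reg(G_k),\,\reg(T)+1\bigr\}\le t+1 ,
\]
as desired.

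The closest thing to an obstacle is making sure the induction is well-founded, that is, that each $G_i$ is a \emph{proper} induced subgraph of $G$; this is precisely what disconnectedness of $G\setminus T$ (equivalently $k\ge 2$, with nonempty components) delivers, and it is the reason the hypothesis is phrased in terms of separators rather than arbitrary small-regularity induced subgraphs. Apart from that, heredity supplies $G_i\in\mathscr{G}$ for free and the decomposition theorem does all the real work; the remaining loose ends --- the base case and the regularity convention for complete graphs --- are pure bookkeeping.
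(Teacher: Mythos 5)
Your proof is correct and follows essentially the same route as the paper's: induction on the number of vertices, applying the separator decomposition with $T=G'$ and using heredity to place each $G_i$ in $\mathscr{G}$ with strictly fewer vertices. Your explicit remarks on why each $G_i$ is proper and on treating complete graphs as base cases are just slightly more careful bookkeeping than the paper's version of the same argument.
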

An induced chordless cycle of length at least four is called a \textit{hole}. 
An interesting connection between notions in algebra and structural graph theory was found by Eisenbud, Green, Hulek, and Popescu. A projective subscheme $X\subseteq \mathbb{P}^r$ satisfies Green-Lazarsfeld condition $N_{2,p}$  for integer $p\geq 1$ if the ideal $I(X)$ of $X$ is generated by quadratics and the first $(p-1)$-steps 
of the minimal free resolution of the ideal $I(X)$ are linear. 
It was shown in \cite{MR2188445}*{Theorem 2.1} that a non-edge ideal $I$ satisfies condition $N_{2,p}$ for some integer $p\geq 2$ if and only if $G$ does not contain a hole of length at most $p+2$. 
 Additionally, the resolution is often simpler if the ideal satisfies property $N_{2,p}$ with $p \geq 2$,  (see discussion in \cite{MR3070118}). This motivates us to look at graphs with restrictions on holes. We say that a graph satisfies condition $N_{2,p}$ if the corresponding non-edge ideal $I$ does. 

A highly studied family of graphs are \textit{perfect} graphs. A graph is perfect if its chromatic number is equal to its clique number, and the same is true for every induced subgraph \cite{MR2159259}*{Section 5.5}. Thus, perfect graphs are simple from the point of view of coloring. One of the most celebrated results in structural graph theory is the Strong Perfect Graph Theorem \cite{MR2233847}, which states that $G$ is perfect if and only if $G$ and its complement do not contain odd holes. Using Corollary \ref{thm:hereditary} we show that perfect graphs satisfying property $N_{2,2}$ have regularity at most $3$, since they form a hereditary family, and it is known that $4$-hole free perfect graphs have a vertex with a chordal neighborhood in \cite{MR1852504}. We note that without property $N_{2,2}$ a perfect graph on $2n$ vertices can have regularity $n+1$ (See Section \ref{sec:genus} for details).

\begin{cor}
If a perfect graph $G$ does not contain a hole of length four, then regularity of $G$ is at most $3$.
\end{cor}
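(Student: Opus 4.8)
The plan is to combine the Hereditary theorem (Theorem~\ref{thm:hereditary}) with the structural result of \cite{MR1852504} on $4$-hole-free perfect graphs. First I would observe that the family $\mathscr{G}$ of perfect graphs containing no hole of length four is hereditary: perfection is preserved under vertex deletion by definition, and deleting a vertex cannot create an induced $4$-cycle that was not already present, so $\mathscr{G}$ is closed under vertex deletion. Thus Theorem~\ref{thm:hereditary} is applicable once we exhibit, for every $G\in\mathscr{G}$, a separator $G'$ with $\reg(G')\le 2$.

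Next I would invoke the theorem of \cite{MR1852504}: every $4$-hole-free perfect graph has a vertex $v$ whose neighborhood $N_G(v)$ induces a chordal graph. I would then take $G' = N_G(v)$ as the candidate separator. Two things need checking. One is that $N_G(v)$ is genuinely a separator, i.e.\ that $G\setminus N_G(v)$ is disconnected; here the key point is that $v$ itself is an isolated vertex of $G\setminus N_G(v)$, so as long as $G$ has at least two vertices not adjacent to each other the open neighborhood disconnects the graph — and if $G$ is complete then $\reg(G)\le 1\le 3$ trivially, or more simply one handles the degenerate cases directly. The other is the regularity bound: since $N_G(v)$ is chordal, Fr\"oberg's theorem (recovered in the excerpt just before the Hereditary theorem) gives $\reg(N_G(v))\le 2$. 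So $t=2$ works uniformly across $\mathscr{G}$, and Theorem~\ref{thm:hereditary} yields $\reg(G)\le t+1 = 3$.

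The only genuine subtlety — and the step I would be most careful about — is the boundary between the ``separator'' formalism of Theorem~\ref{1.Separator}/Theorem~\ref{thm:hereditary} and the ``vertex decomposition'' formalism of Theorem~\ref{1.vertex}. The cleanest route actually bypasses the separator language entirely: apply Theorem~\ref{1.vertex} directly with the vertex $v$ supplied by \cite{MR1852504}, giving $\reg(G)\le\max\{\reg(G\setminus v),\ \reg(N_G(v))+1\}\le\max\{\reg(G\setminus v),\ 3\}$, and then induct on the number of vertices, using that $G\setminus v$ is again $4$-hole-free and perfect (hence in $\mathscr{G}$) and that the base case of a graph with no edges, or with no non-edges, has regularity at most $1$. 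Either packaging works; I would present the induction via Theorem~\ref{1.vertex} as it makes the hereditary hypothesis transparent and avoids fussing over whether $N_G(v)$ literally disconnects $G$ in edge cases.

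Finally I would record the one external input being used, namely that $4$-hole-free perfect graphs admit a vertex with chordal neighborhood \cite{MR1852504}, and note that chordality of the neighborhood is exactly what lets Fr\"oberg's bound $\reg\le 2$ kick in there; no other graph theory is needed, and the characteristic-$0$ hypothesis is inherited from the ambient setup. I do not anticipate any computational difficulty — the proof is essentially a two-line citation-plus-induction once the hereditary closure of $\mathscr{G}$ is noted.
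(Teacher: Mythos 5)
Your proposal is correct and follows essentially the same route as the paper, which proves this via its Corollary on hereditary families whose members all have a vertex with a chordal neighborhood (the combination of Fr\"oberg's bound, the result of \cite{MR1852504}, and the vertex/separator decomposition with induction on the number of vertices). Your extra care about whether $N_G(v)$ literally disconnects $G$ in degenerate cases is a reasonable refinement of a point the paper glosses over, but it does not change the argument.
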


We observe that graphs without even holes form a hereditary family, and it was shown in \cite{MR2292535} that an even-hole free graph contains a vertex with a chordal neighborhood. Therefore we obtain the following corollary:
\begin{cor}\label{1.evenhole}
If the graph $G$ is even-hole free, then regularity of $G$ is at most $3$.
\end{cor}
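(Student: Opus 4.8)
The plan is to combine the Vertex Decomposition (Theorem \ref{1.vertex}) with two structural facts: (i) the family of even-hole-free graphs is hereditary, i.e. closed under vertex deletion, and (ii) every even-hole-free graph contains a vertex $v$ whose neighborhood $N_G(v)$ induces a chordal graph (this is the result of \cite{MR2292535} cited above). Granting these, the argument is a short induction on the number of vertices of $G$.

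First I would dispose of the base case: if $G$ has no vertices, or more generally is a complete graph or empty, then $\reg(G) \le 1 \le 3$. For the inductive step, assume the claim holds for all even-hole-free graphs on fewer vertices than $G$. By fact (ii) pick a vertex $v$ with $N_G(v)$ chordal. Then $\reg(N_G(v)) \le 2$ by Fröberg's theorem (recalled in the excerpt as the $\reg \le 2$ bound for chordal graphs). Also $G \setminus v$ is an induced subgraph of $G$, hence even-hole-free by fact (i) and has fewer vertices, so by the inductive hypothesis $\reg(G \setminus v) \le 3$. Now Theorem \ref{1.vertex} gives
\[
\reg(G) \le \max\{\reg(G\setminus v),\ \reg(N_G(v))+1\} \le \max\{3,\ 2+1\} = 3,
\]
which completes the induction.

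Alternatively, one can phrase this as a direct application of the Hereditary Theorem (Theorem \ref{thm:hereditary}): the even-hole-free graphs form a hereditary family, and for each such $G$ the set $N_G(v)$ (for $v$ as above) is a separator of $G$ — deleting the closed neighborhood, i.e. deleting $v$ together with $N_G(v)$, disconnects $v$ from the rest, so $N_G(v)$ together with any vertices not adjacent to $v$ forms a separator; more simply, $N_G(v)$ is a separator whenever $v$ is not adjacent to everything, and the remaining case is trivial — with $\reg(N_G(v)) \le 2 =: t$. Theorem \ref{thm:hereditary} then yields $\reg(G) \le t+1 = 3$.

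The only real point requiring care is verifying that $N_G(v)$ genuinely functions as a separator in the sense needed for Theorem \ref{thm:hereditary}, handling the degenerate case where $v$ is adjacent to all other vertices (in which case $G$ itself need not be disconnected by removing $N_G(v)$); this is why the clean induction via Theorem \ref{1.vertex} is the safer route, since it requires no separator hypothesis. Everything else is an immediate citation: heredity of even-hole-freeness is obvious since a hole in an induced subgraph is a hole in $G$, and the existence of a vertex with chordal neighborhood is exactly \cite{MR2292535}. I expect no substantive obstacle.
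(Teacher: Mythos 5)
Your proof is correct and follows essentially the same route as the paper: the authors also combine heredity of even-hole-freeness, the existence of a vertex with chordal neighborhood from \cite{MR2292535}, Fr\"oberg's bound $\reg\leq 2$ for chordal graphs, and the vertex-neighborhood decomposition (packaged in the paper as Corollary \ref{cor:hereditary}). Your remark about the degenerate case when using the separator formulation is a sensible precaution, but the induction via Theorem \ref{1.vertex} is exactly what the paper intends.
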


We also generalize a result of Nevo \cite{MR2739498}*{Theorem 5.1}. 
Let $F$ be the graph on $5$ vertices, consisting of an isolated vertex and two triangles sharing one edge. If $G$ satisfies condition $N_{2,2}$ and does not contain $F$ as an induced subgraph then regularity of $G$ is at most $3$.
\begin{cor}\label{1.fan}
Let $G$ be a graph satisfying $N_{2,2}$ which does not contain $F$ as an induced subgraph. Then regularity of $G$ is at most $3$.
\end{cor}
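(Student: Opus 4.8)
The plan is to show that the family $\mathscr{G}$ of graphs that satisfy $N_{2,2}$ and contain no induced copy of $F$ is hereditary and that \emph{every non-chordal member of $\mathscr{G}$ has a vertex with a chordal open neighborhood}. Granting this, the bound $\reg(G)\le 3$ follows in the spirit of the Hereditary Theorem (Theorem~\ref{thm:hereditary}) with $t=2$, e.g. by induction on $|V(G)|$: if $G$ is chordal then $\reg(G)\le 2$ by Fr\"oberg's theorem; if $G$ is disconnected one reduces to its connected components; and if $G$ is connected and not chordal, a vertex $v$ with $N_G(v)$ chordal is necessarily non-universal (a universal vertex with a chordal neighborhood would make $G$ itself chordal), so Vertex Decomposition (Theorem~\ref{1.vertex}) gives $\reg(G)\le\max\{\reg(G\setminus v),\,\reg(N_G(v))+1\}\le\max\{\reg(G\setminus v),3\}$, and $G\setminus v\in\mathscr{G}$ has fewer vertices. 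Heredity of $\mathscr{G}$ is immediate: $N_{2,2}$ amounts to having no induced $4$-cycle (holes have length at least four), and both this property and ``no induced $F$'' pass to induced subgraphs.

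So the heart of the matter is the structural statement. I would prove it by contradiction: assume every vertex of $G$ has a non-chordal neighborhood, and fix an arbitrary vertex $v$. Then $N_G(v)$ contains an induced cycle $C=(c_1,\dots,c_m)$, which has length $m\ge 5$ since $G$ has no induced $4$-cycle, and all $c_i$ are adjacent to $v$. I will show $v$ must be a universal vertex of $G$; since $v$ is arbitrary this makes $G$ complete, hence chordal, contradicting the hypothesis.

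For the claim, let $w\notin C\cup\{v\}$ and suppose $w\not\sim v$; deriving a contradiction forces $w\sim v$, and since $v\sim c_i$ for every $i$ this makes $v$ universal. First, for each $i$ the vertices $\{v,c_i,c_{i+1},c_{i+2}\}$ induce a diamond: all three $c$'s lie in $N_G(v)$, the edges $c_ic_{i+1}$, $c_{i+1}c_{i+2}$ are present, and $c_ic_{i+2}$ is a non-edge because $m\ge 5$. A vertex with no neighbor in an induced diamond, together with that diamond, would be an induced $F$, so by $F$-freeness each such diamond dominates $G$; applied to $w$ (which is not one of the four vertices) together with $w\not\sim v$, this gives that $w$ has a neighbor among every three cyclically consecutive vertices of $C$. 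Second, if $c_i$ and $c_j$ were neighbors of $w$ that are non-adjacent along $C$, then $c_i,v,c_j,w$ would induce a $4$-cycle (the chords $c_ic_j$ and $vw$ are non-edges), contradicting $N_{2,2}$; hence the neighbors of $w$ on $C$ form a clique of the cycle $C$, which, $C$ being triangle-free, consists of at most two --- and, if two, consecutive --- vertices. These two conclusions clash: if $N_G(w)\cap C$ is a clique of $C$, then its complement along $C$ is an arc of at least $m-2\ge 3$ vertices, which contains three cyclically consecutive vertices avoiding all neighbors of $w$.

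The step I expect to require the most care is precisely this one: verifying that $\{v,c_i,c_{i+1},c_{i+2}\}$ is a genuinely \emph{induced} diamond and $\{c_i,v,c_j,w\}$ a genuinely \emph{induced} $4$-cycle (distinctness of the four vertices, absence of chords), dealing with the harmless degenerate situation $V(G)=C\cup\{v\}$ in which no vertex $w$ exists and $v$ is universal outright, and checking the small combinatorial fact that a clique of $C_m$ with $m\ge 5$ cannot meet every three consecutive vertices. Everything else --- in particular passing from the structural statement to $\reg(G)\le 3$ via Theorem~\ref{thm:hereditary} or the induction above --- I expect to be routine.
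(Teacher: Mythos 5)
Your proposal is correct and follows essentially the same route as the paper: both show that a graph in this hereditary family with no vertex having a chordal neighborhood would contain either an induced $F$ (via the diamonds $\{v,c_i,c_{i+1},c_{i+2}\}$ on a hole of length at least $5$ in some neighborhood) or an induced $4$-cycle $\{u_1,v,u_2,w\}$, then conclude via the hereditary/vertex-decomposition machinery. The only differences are cosmetic: the paper starts from a minimum-degree vertex rather than an arbitrary one, and you spell out the ``every three consecutive vertices of $C$ meet $N(w)$, yet $N(w)\cap C$ is a clique'' counting that the paper leaves implicit.
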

Nevo's result assumes that $G$ satisfies $N_{2,2}$ and does not contain the union of an isolated vertex with a triangle as an induced subgraph, which is a stronger condition on $G$. 

So far we only considered decompositions which use induced subgraphs of $G$, but now we consider an interesting decomposition where subgraphs $G_1$ and $G_2$ are not induced. Let $M$ be a subgraph of $G$ (not necessarily induced). We use $G-M$ to denote the subgraph of $G$ obtained by deleting edges of $M$. Let $G_{M}$ be the induced subgraph on vertices of $M$ and vertices of $G$ which are adjacent to both vertices of an edge in $M$. In other words, $G_{M}=G[V(M)\cup W]$ where $W$ is the subset of vertices of $G$ given by $k\in W$ if $ik,jk\in E(G)$ for some $ij\in E(M)$.

\begin{thm} \label{1.subgraphdecomp}
Let $G$ be a graph and $M$ be a subgraph in $G$.  Then, we have 
\[\reg(G)\leq\max\{\reg(G-M), \reg (G_{M}),\reg(G_{M}-M)+1\}.\] 
\end{thm}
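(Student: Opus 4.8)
The plan is to derive this from the master decomposition Theorem \ref{1.mainthm} by choosing the right pair of covering subgraphs. The natural candidates are $G_1 = G - M$ (delete the edges of $M$) and $G_2 = G_M$ (the induced subgraph on $V(M)$ together with the common-neighbor set $W$). First I would verify the clique-covering hypothesis: every clique $K$ of $G$ must be a clique of $G_1$ or of $G_2$. If $K$ contains no edge of $M$, then all its edges survive in $G-M$, so $K$ is a clique of $G_1$. If $K$ does contain an edge $ij \in E(M)$, then every other vertex $k \in K$ is adjacent in $G$ to both $i$ and $j$, hence $k \in W \subseteq V(G_M)$; since $K \subseteq V(M) \cup W = V(G_M)$ and $G_M$ is an induced subgraph, $K$ is a clique of $G_2$. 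So the hypothesis of Theorem \ref{1.mainthm} holds.

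Next I would identify the intersection $G_1 \cap G_2 = (G-M) \cap G_M$. As a vertex set this is $V(G_M)$, and the edges are those edges of $G_M$ that are not in $M$ — that is, exactly $G_M - M$. Here one should be slightly careful about what "$\cap$" means in Theorem \ref{1.mainthm}: it should be the graph on the common vertices with the common edges, and with that reading $G_1 \cap G_2 = G_M - M$. Plugging $\reg(G_1) = \reg(G-M)$, $\reg(G_2) = \reg(G_M)$, and $\reg(G_1 \cap G_2) = \reg(G_M - M)$ into Theorem \ref{1.mainthm} yields exactly
\[
\reg(G) \leq \max\{\reg(G-M),\ \reg(G_M),\ \reg(G_M - M) + 1\},
\]
as desired.

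The step I expect to require the most care is pinning down the intersection graph and confirming it is literally $G_M - M$ (both vertex set and edge set), since Theorem \ref{1.mainthm}'s statement leaves the precise meaning of $G_1 \cap G_2$ to the reader; one wants $G_1$ and $G_2$ to be subgraphs of a common ambient graph so that their intersection is unambiguous, and here the ambient graph can be taken to be $G$ with the edges of $M$ colored, or more simply one just checks directly that a pair $\{u,w\}$ is an edge of both $G-M$ and $G_M$ iff $u,w \in V(G_M)$ and $uw \in E(G) \setminus E(M)$. The clique-covering verification is the conceptual heart but is short; everything else is bookkeeping. I would also remark that, unlike the earlier separator and vertex decompositions, neither $G_1$ nor $G_2$ need be an induced subgraph of $G$, which is the novel feature worth flagging right after the proof.
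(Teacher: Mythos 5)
Your proposal is correct and follows essentially the same route as the paper: both take $G_1 = G-M$ and $G_2 = G_M$, verify that they cover the cliques of $G$ by splitting on whether a clique contains an edge of $M$, identify the intersection as $G_M - M$, and invoke Theorem \ref{1.mainthm}. Your explicit bookkeeping of the intersection's vertex and edge sets matches the paper's own check, so there is nothing to add.
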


As a special case, by taking $M$ to be an edge $e=ij$ in $G$ in Theorem \ref{1.subgraphdecomp},
we get the following edge-neighborhood decomposition theorem.

\begin{thm}[Edge-neighborhood decomposition] \label{edgenbhd}
 Let $G$ be a graph and $e=ij$ be an edge in $G$ with vertices $i$ and $j$. 
 Then,  
 \[\reg(G)\leq\max\{\reg(G-e),\reg(G_{e}-e)+1\}.\] 
\end{thm}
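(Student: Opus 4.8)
The strategy is to read the statement off Theorem~\ref{1.subgraphdecomp} by a suitable choice of $M$. Let $M$ be the subgraph of $G$ whose only edge is $e = ij$ (on the two vertices $i$ and $j$). Then $V(M) = \{i,j\}$, the vertex set $W$ appearing in the definition of $G_M$ is $W = \{k \in V(G) : ik \in E(G)\text{ and }jk \in E(G)\}$, and with this choice $G_M = G_e$, $G - M = G - e$, and $G_M - M = G_e - e$. Substituting, Theorem~\ref{1.subgraphdecomp} already yields
\[\reg(G) \le \max\{\reg(G - e),\ \reg(G_e),\ \reg(G_e - e) + 1\},\]
so the whole task reduces to showing that the middle term is redundant, i.e.\ that $\reg(G_e) \le \reg(G_e - e)$.

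For this I would first observe that $i$ and $j$ are both \emph{universal} vertices of $G_e$: every other vertex of $G_e$ lies in $W$ and is therefore adjacent to $i$ and to $j$, while $i$ and $j$ are adjacent because $e \in E(G)$. Hence the clique complex of $G_e$ is a cone with apex $i$ over the clique complex of $G_e \setminus i$, which is in turn a cone with apex $j$ over the clique complex of $G_e \setminus \{i,j\} = G[W]$. Equivalently, neither $x_i$ nor $x_j$ divides a minimal generator of the non-edge ideal $I(G_e)$, so $I(G_e)$ is extended from the subring on the variables of $W$; adjoining free variables changes neither the minimal free resolution nor the regularity, giving $\reg(G_e) = \reg(G[W])$.

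Finally, $G[W]$ is an induced subgraph of $G_e - e$: deleting the edge $e = ij$ affects no pair of vertices of $W$, so $(G_e - e)[W] = G[W]$. Since regularity of a non-edge ideal does not increase under passing to an induced subgraph --- immediate from Hochster's formula, as every induced subcomplex of the clique complex of $(G_e - e)[W]$ is also an induced subcomplex of the clique complex of $G_e - e$ --- we get $\reg(G[W]) \le \reg(G_e - e)$. Combining, $\reg(G_e) = \reg(G[W]) \le \reg(G_e - e)$, which lets us erase $\reg(G_e)$ from the maximum and recover the stated bound. The only content beyond Theorem~\ref{1.subgraphdecomp} is these two routine facts about regularity of Stanley-Reisner ideals of flag complexes (invariance under adjoining a free variable, and monotonicity under induced subgraphs), both consequences of the same Hochster's formula used for Theorem~\ref{1.mainthm}; accordingly I anticipate no genuine obstacle, and the proof is essentially bookkeeping once the general decomposition is available.
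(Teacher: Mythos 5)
Your proof is correct and follows essentially the same route as the paper: both specialize Theorem \ref{subgraphdecomp} to the single-edge subgraph $M=e$ and then show that the middle term $\reg(G_e)$ is absorbed by $\reg(G_e-e)+1$. The only difference is in that last step --- the paper applies the vertex decomposition to get $\reg(G_e)\le\reg(G_e\setminus i)+1\le\reg(G_e-e)+1$ (using that $G_e\setminus i$ is an induced subgraph of $G_e-e$), whereas you observe that $i$ and $j$ are universal in $G_e$, so $\Delta(G_e)$ is a cone and $\reg(G_e)=\reg(G[W])\le\reg(G_e-e)$; your bound is marginally sharper but yields the same theorem.
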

In \cite{MR3199032}, Fern\'{a}ndez-Ramos and Gimenez classified bipartite graphs whose edge ideals have regularity at most $3$. Using Corollary \ref{edgenbhd} and structural graph theory results we quickly recover their classification in Theorem \ref{thm:bipartite} 
Another measure of complexity of a graph is its \textit{genus}, which is defined as the genus of the smallest orientable surface on which $G$ can be drawn in such a way that edges of $G$ intersect only at the vertices \cite{MR1852593}. The famous case of planar graphs corresponds to genus $0$. Woodroofe showed in \cite{MR3249840} that planar graphs have regularity at most $4$ and this bound is tight. We generalize this bound to arbitrary genus:

\begin{thm}
Let $S_{g}$ be the orientable 2-dimensional manifold of genus $g$. Suppose that a graph $G$ is embedded into $S_{g}$. Then, regularity of $G$ is at most $\lfloor1+\sqrt{1+3g}\rfloor+2$.
\end{thm}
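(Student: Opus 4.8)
The plan is to induct on the number of vertices of $G$, using the Vertex Decomposition (Theorem~\ref{1.vertex}). Since the bound $f(g):=\lfloor 1+\sqrt{1+3g}\rfloor+2$ is nondecreasing in $g$, we may assume that $g$ is the minimum genus of $G$. The main combinatorial input is Euler's formula: a graph embedded in $S_{g}$ with $n\ge 3$ vertices has at most $3n-6+6g$ edges, and the same holds for every subgraph of $G$ (each is again embeddable in $S_{g}$), so $G$ is $d_{g}$-degenerate where $d_{g}$ is, roughly, $\frac{5+\sqrt{1+48g}}{2}$. In particular $G$ has a vertex $v$ with $\deg(v)\le d_{g}$.

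Applying Theorem~\ref{1.vertex} at such a $v$ gives $\reg(G)\le\max\{\reg(G\setminus v),\ \reg(N_{G}(v))+1\}$. The subgraph $G\setminus v$ is embedded in $S_{g}$ and has $n-1$ vertices, so $\reg(G\setminus v)\le f(g)$ by the inductive hypothesis and monotonicity of $f$; hence it remains to bound $\reg(N_{G}(v))$ by $f(g)-1$. Here one uses that $N_{G}(v)$ is not an arbitrary graph on $\deg(v)$ vertices: the embedding of $G$ restricts to an embedding of the closed neighborhood $N_{G}[v]=K_{1}*N_{G}(v)$ in $S_{g}$, which exhibits $N_{G}(v)$ as embeddable in $S_{g}$ with all of its vertices lying on a single face. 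For such graphs Euler's formula gives the stronger estimate $|E|\le 2|V|-3+6g$ (the outerplanar inequality when $g=0$), and hence a sharper degeneracy bound, which one converts into a bound on $\reg(N_{G}(v))$ --- either directly from the matching number of the complement (via the easy inequality $\reg(H)\le\lfloor|V(H)|/2\rfloor+1$), or by rerunning the argument inductively on $N_{G}(v)$ together with the identity $\reg(N_{G}(v))=\reg(N_{G}[v])$ recorded before Theorem~\ref{1.vertex}.

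The delicate point --- and where I expect the main difficulty to lie --- is making the additive constants match. The form $1+\sqrt{1+3g}$ is calibrated so that the cocktail-party graphs $K_{m\times 2}$ (complete multipartite with $m$ parts of size $2$), which satisfy $\reg=m+1$ and minimum genus $\lceil(m-1)(m-3)/3\rceil$, are extremal, and a crude combination of ``$S_{g}$ is $d_{g}$-degenerate'' with ``$\reg(H)\le\lfloor|V(H)|/2\rfloor+1$'' loses about one unit; closing this gap requires using the outer-$S_{g}$ structure of neighborhoods carefully. I also expect one must treat separately the regime where $G$ is a dense graph on few vertices: there the complement $\overline{G}$ is sparse, so $\reg(G)\le\lfloor n/2\rfloor+1$ is already small, and a large induced copy of $K_{m\times 2}$ inside $G$ would by itself force the genus of $G$ to be large (Euler's formula applied to $K_{1}*K_{m\times 2}$), pinning $\reg(G)$ down to the value forced by the induced-matching lower bound.
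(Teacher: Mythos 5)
Your overall strategy is the paper's: use Euler's formula to find a low-degree vertex $v$, apply the vertex decomposition at $v$, bound $\reg(N_{G}(v))$ via the elementary estimate $\reg(H)\le\lfloor|V(H)|/2\rfloor+1$ (Lemma \ref{lem:genus}), and close by induction over the hereditary family of $S_{g}$-embeddable graphs. But the proposal does not close the decisive step, and you say so yourself: you need $\reg(N_{G}(v))\le\lfloor 1+\sqrt{1+3g}\rfloor+1$, and combining your degeneracy constant $d_{g}\approx\frac{5+\sqrt{1+48g}}{2}$ with Lemma \ref{lem:genus} genuinely falls short by one for infinitely many $g$. Concretely, at $g=32$ your bound gives a vertex of degree at most $\lfloor\frac{5+\sqrt{1537}}{2}\rfloor=22$, hence only $\reg(N_{G}(v))\le 12$, whereas the theorem needs $\lfloor 1+\sqrt{97}\rfloor+1=11$. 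Moreover this loss is unavoidable if you insist on degeneracy: $K_{23}$ embeds in $S_{32}$ and is $22$-regular, so the class really is only $22$-degenerate there. The remedies you sketch --- the outer-$S_{g}$ inequality $|E|\le 2|V|-3+6g$ for neighborhoods, a separate dense regime, extremality of $K_{m(2)}$ --- are not carried out, and the first would require its own induction whose constants you have not verified. As written this is a plan with an acknowledged hole at exactly the point where the theorem is decided.

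The fix is much simpler than what you propose, and it is what the paper does: do not pass through degeneracy, but bound the minimum degree $d$ of $G$ itself, excluding the complete graph (which has regularity $1$). If $G$ is not complete then $d\le|V|-2$, i.e.\ $|V|\ge d+2$. Combining $2|E|\ge d|V|$, $2|E|\ge 3|F|$ and $|V|-|E|+|F|=2-2g$ yields $12g-12\ge(d-6)|V|\ge(d-6)(d+2)$, hence $d^{2}-4d-12g\le 0$ and $d\le 2+2\sqrt{1+3g}$. (The middle step needs $d\ge 6$; for $d\le 5$ one has $\reg(N_{G}(v))\le 3$ and the conclusion is immediate.) Now the ``crude'' bound already suffices: $\reg(N_{G}(v))\le\lfloor d/2\rfloor+1\le\lfloor\tfrac12\lfloor 2+2\sqrt{1+3g}\rfloor\rfloor+1=\lfloor 1+\sqrt{1+3g}\rfloor+1$. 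Feeding this into Theorem \ref{1.vertex} and your induction --- or directly into Theorem \ref{thm:hereditary2}, since $S_{g}$-embeddable graphs form a hereditary family and $N_{G}(v)$ is a separator --- gives $\reg(G)\le\lfloor 1+\sqrt{1+3g}\rfloor+2$. The single unit you were losing is exactly the difference between $|V|\ge d+1$ (degeneracy, which must accommodate complete subgraphs) and $|V|\ge d+2$ (minimum degree of a non-complete graph); no outer-embedding structure of the neighborhood is needed.
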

Note that this bound is also tight. 
Indeed, it is known by \cite{MR0349461} that the genus of the complete $m$-partite graph with every part of size two, $K_{m(2)}(=K_{2,2,\dots,2})$ is $\frac{(m-3)(m-1)}{3}$ for $m\not\equiv 2 \, (\text{mod} 3)$. Then, $\reg (K_{m(2)})\leq\lfloor1+\sqrt{1+3g}\rfloor+2=m+1$ which is tight.

Let $n$ be the number of vertices of $G$. It is well known (see \cite{MR1417301}*{Lemma 2.1}) that $\reg(G) \leq \frac{n}{2} +1$, and this is tight, by considering $G=K_{\frac{n}{2}(2)}$ with $n$ even. However, as shown in \cite{MR3070118}, the bounds on regularity become much better if $G$ satisfies condition $N_{2,p}$ for some $p \geq 2$. Specifically, if $G$ satisfies $N_{2,p}$ for $p \geq 2$, then $$\reg(G)\leq\log_{\frac{p+3}{2}}\frac{n-1}{p}+3.$$

We prove the following upper bound of regularity of $G$, which slightly improves on the bound above.
\begin{thm}
	If $G$ satisfies property $N_{2,p}$ for $p \geq 2$, then,
	\[ 
    \reg(G)\leq \log_{\frac{p+4}{2}} \frac{n}{p+1} +4.	
    \]
\end{thm}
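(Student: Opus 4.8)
The plan is to prove $\reg(G)\le f(n):=\log_{(p+4)/2}\!\big(\tfrac{n}{p+1}\big)+4$ by induction on the number of vertices $n$, following the strategy of \cite{MR3070118} but using the edge--neighborhood decomposition (Theorem \ref{edgenbhd}) in place of a single-vertex neighborhood, together with the fact that property $N_{2,p}$ forces every hole of $G$ to have length at least $p+3$.

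For the base of the induction, observe that if $n\le p+1$ then any induced cycle of $G$ has length at most $p+2$ and so would be a hole; hence $G$ is chordal and $\reg(G)\le 2$. Since $f(n)\ge 2$ for all $n\ge 1$ and $f(n)\ge 4$ for $n\ge p+1$, this shows $\reg(G)\le f(n)$ whenever $\reg(G)\le 4$, so in the inductive step we may assume $n\ge p+1$ and $\reg(G)\ge 5$; in particular $G$ has a hole, and we fix a shortest one, $C$, of length $\ell\ge p+3$.

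The induction reduces, via Theorem \ref{1.vertex} (or Theorem \ref{edgenbhd}), to a single combinatorial claim: under the standing assumptions, $G$ contains a vertex $v$ of degree at most $\tfrac{2}{p+4}\,n$. Indeed, Theorem \ref{1.vertex} then gives $\reg(G)\le\max\{\reg(G\setminus v),\reg(N_G(v))+1\}$, and since $G\setminus v$ (on $n-1$ vertices) and $N_G(v)$ (on $\deg_G v$ vertices) again satisfy $N_{2,p}$, the inductive hypothesis yields $\reg(G)\le\max\{f(n-1),\,f(\deg_G v)+1\}$; as $f$ is increasing we have $f(n-1)\le f(n)$, and a short computation with logarithms shows $f(d)+1\le f(n)$ exactly when $d\le\tfrac{2}{p+4}n$ --- it is precisely this identity that pins the base of the logarithm at $\tfrac{p+4}{2}$, while the constant $4$ and the normalization $n/(p+1)$ are forced by the base cases.

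To prove the claim, note first that since $N_{2,p}$ implies $C_4$-freeness, a double count of paths of length two gives $\sum_u\binom{\deg_Gu}{2}\le\binom n2$, hence the minimum degree satisfies $\delta(G)<1+\sqrt n$, which is at most $\tfrac{2}{p+4}n$ once $n$ is large in terms of $p$; so the claim is easy for large $n$. For the bounded remaining range of $n$ one must use the full strength of $N_{2,p}$ and of the hypothesis $\reg(G)\ge 5$, and this is where I expect the difficulty to lie. The tool is the edge--neighborhood decomposition applied to an edge $e=ij$ of the shortest hole $C$: because $C$ is chordless, no vertex of $C\setminus\{i,j\}$ is adjacent to both $i$ and $j$, so the set $W$ of common neighbours of $i$ and $j$ is disjoint from $C$ and $|V(G_e)|\le n-\ell+2\le n-p-1$; feeding this into $\reg(G)\le\max\{\reg(G-e),\reg(G_e-e)+1\}$ replaces $G$ by genuinely smaller graphs, and this is the mechanism by which the length $p+3$ of a shortest hole enters the base of the logarithm, improving the base $\tfrac{p+3}{2}$ of \cite{MR3070118} (whose reduction discards only a single-vertex neighborhood) to $\tfrac{p+4}{2}$. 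The main obstacle here is that edge deletion need not preserve $N_{2,p}$: a new hole in $G-e$ must pass through the non-edge $ij$, and splitting it into its two arcs between $i$ and $j$ and re-inserting $ij$ shows that any such new hole is either of length $\ge p+3$ (harmless) or a $4$-cycle $i\,a\,j\,b$ with $a,b$ nonadjacent common neighbours of $i$ and $j$. One must therefore choose $e$ so that $W$ is a clique --- so that $G_e$ is a complete graph, $\reg(G_e-e)=2$, and $N_{2,p}$ survives --- or destroy the stray $4$-cycles by a further local reduction, all the while keeping the vertex counts and the additive constants under control; organizing this is the technical heart of the argument.
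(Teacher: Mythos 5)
Your reduction is clean as far as it goes: the induction on $n$ via Theorem \ref{1.vertex} does show that everything would follow from the single claim that any $G$ satisfying $N_{2,p}$ with $\reg(G)\geq 5$ has a vertex of degree at most $\frac{2}{p+4}n$. But that claim is exactly what you do not prove, and the two routes you sketch toward it both break down. First, the double count $\sum_u\binom{\deg_G u}{2}\leq\binom{n}{2}$ requires that no two vertices have two common neighbours, i.e.\ that $G$ has no $C_4$ \emph{subgraph}; property $N_{2,p}$ only forbids \emph{induced} $4$-cycles, and an induced-$C_4$-free graph can have two vertices with arbitrarily many common neighbours (they need only form a clique), so the bound $\delta(G)<1+\sqrt{n}$ is unjustified. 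Second, the fallback via the edge-neighbourhood decomposition does not fit your induction: $G-e$ has the same number of vertices as $G$, so $\reg(G)\leq\max\{\reg(G-e),\reg(G_e-e)+1\}$ gives no progress in an induction on $n$, quite apart from the (acknowledged) fact that $G-e$ may acquire $4$-holes. You explicitly defer the ``technical heart'' of the argument, so the proof is incomplete at its only essential point.

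For comparison, the paper's proof is not a one-step induction on $n$ and does not use the edge decomposition at all. It builds a tower $G_0\supseteq G_1\supseteq\cdots\supseteq G_\ell$ of ``trimmed'' minimal-degree neighbourhoods with $\reg(G)=\reg(G_i)+i$, fixes a shortest hole $C_m$ ($m\geq p+3$) surviving in all of them, and shows by an inductive count of $\sum_{v\in C_m}\deg(v)$ that each level of the tower must discard at least $\frac{m^i(m-3)}{2^i}$ vertices; summing the geometric series bounds $\ell$ and hence $\reg(G)\leq\ell+3$. Crucially, the passage from base $\frac{p+3}{2}$ (the Dao--Huneke--Schweig bound) to base $\frac{p+4}{2}$ comes from Corollary \ref{even,odd holefree}: a $4$-hole-free graph of regularity at least $4$ must contain both even and odd holes, hence a hole of length at least $p+4$, and running the same count with that longer hole gives the stated bound. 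Your proposal contains no trace of this parity argument, and attributes the improved base to a mechanism (discarding a hole's worth of vertices via an edge of $C$) that is not made to work. If you want to salvage your approach, you would need to actually prove the degree claim --- and I would expect any such proof to reprove, in effect, the tower-counting argument plus the even/odd-hole observation.
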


\section{Graphs and Clique Complexes}\label{sec:2}

A simple graph $G$ consists of the vertex set $V(G)$ and the edge set $E(G)$. For a subgraph $G'$ of $G$ we use $G\setminus G'$ to denote the induced subgraph on $V(G)\setminus V(G')$, and $G-G'$
to denote the subgraph of $G$ obtained by deleting all edges of $G'$ (but no vertices). An induced subgraph on a subset of vertices $W$ is denoted by $G[W]$. 

A simple graph $G$ with vertex set $[n]$ can be identified with a square-free quadratic monomial ideal $I(G)$ over a field $k$ via $I(G) = \langle x_i x_j \, |\, ij \notin E \rangle $. We call $I(G)$ the non-edge ideal of $G$.

For a vertex $v$ of $G$, we define the open neighborhood of $v$, denoted by $N_G(v)$, to be the induced subgraph of $G$ on vertices which are adjacent to $v$. We also define the closed neighborhood of $v$, denoted by $N_G[v]$, to be the induced subgraph  on $v$ and the vertices adjacent to $v$. 
Now we introduce the clique complex of a graph. 
\begin{defn*}
Given a graph $G$, the clique complex of $G$, denoted by $\Delta G$, is a simplicial complex that consists of $t$-simplices $(x_{i_{1}},x_{i_{2}},\dots,x_{i_{t}})$ whenever $G[\{x_{i_{1}},x_{i_{2}},\dots,x_{i_{t}}\}]=K_{t}$ where $K_t$ is the complete graph on $t$ vertices.
\end{defn*}
 We observe that the non-edge ideal $I(G)$ is the Stanley-Reisner ideal of the clique complex $\Delta G$ \cite{MR1453579}*{Chapter 2}. The graph $G$ associated with the clique complex $\Delta G$ is uniquely determined as the closure of the $1$-skeleton of $\Delta G$. Thus, there is a one-to-one correspondence between the family of simple graphs $G$ and the family of clique complexes $\Delta G$ given by $G\mapsto\Delta G$ and $F\mapsto G(F)$. 
Homology of clique complexes $\Delta G$ gives Betti numbers of non-edge ideal $I(G)$ (or equivalently the Stanley-Reisner rings $k[\Delta]:=k[x_1,\dots,x_n]/I(G)$) via Hochster's formula, if the characteristic of the field $k$ is $0$ (see \cite{MR3213521}*{Remark 7.15}).

\begin{thm}[Hochster]\label{Hochster}
 Let $I(G)$ be the non-edge ideal of a graph $G$.
Then for $t\geq i+2$, 
\[
\beta_{i,t}(I(G))=\sum_{|W|=t}\dim_{k}(\tilde{H}_{t-i-2}(\Delta G[W])),
\]
where $W$ runs over all subsets of the vertex set of $G$ of size $t$.
\end{thm}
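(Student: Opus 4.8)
The plan is to prove the finely graded ($\mathbb{Z}^n$-graded) version of the formula over $S=k[x_1,\dots,x_n]$ and then coarsen to the $\mathbb{N}$-grading by $t=|\mathbf{a}|$. First I would pass from the ideal to the quotient ring: writing $\Delta=\Delta(G)$ so that $k[\Delta]=S/I(G)$, the long exact sequence in $\operatorname{Tor}$ attached to $0\to I(G)\to S\to k[\Delta]\to 0$ gives $\beta_{i,t}(I(G))=\beta_{i+1,t}(k[\Delta])$ in the relevant positive degrees. Hence it suffices to prove the ring formula
\[
\beta_{i,t}(k[\Delta])=\sum_{|W|=t}\dim_k \tilde H_{t-i-1}(\Delta G[W]),
\]
and substitute $i\mapsto i+1$; the hypothesis $t\ge i+2$ guarantees that the resulting homological degree $t-i-2$ is nonnegative, so we are in the genuine (non-negative) range of reduced homology.

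Next I would compute $\beta_{i,t}(k[\Delta])=\dim_k\operatorname{Tor}_i^S(k[\Delta],k)_t$ using the Koszul complex $K_\bullet=\bigwedge^\bullet S^n$ resolving $k$, so that the Tor modules are the homology of $K_\bullet\otimes_S k[\Delta]$. Since both $K_\bullet$ and $k[\Delta]$ are $\mathbb{Z}^n$-graded, this complex splits as a direct sum over multidegrees $\mathbf{a}\in\mathbb{Z}^n$, and a basis of its homological-degree-$i$ term in multidegree $\mathbf{a}$ is given by pairs $(\sigma,m)$ with $\sigma\subseteq[n]$, $|\sigma|=i$, and $m$ a monomial of $k[\Delta]$ satisfying $\mathbf{1}_\sigma+\deg m=\mathbf{a}$. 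A short argument using that the Stanley--Reisner ring is \emph{squarefree} then shows the strand in multidegree $\mathbf{a}$ is acyclic unless $\mathbf{a}=\mathbf{1}_W$ is the indicator vector of some $W\subseteq[n]$; this is the step that reduces the computation to purely combinatorial data, with $|W|=t$ contributing to $\beta_{i,t}$.

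The heart of the argument is to identify, for $\mathbf{a}=\mathbf{1}_W$, the strand $(K_\bullet\otimes_S k[\Delta])_{\mathbf{1}_W}$ with the augmented (reduced) simplicial cochain complex of $\Delta G[W]$. In this multidegree one must have $\sigma\subseteq W$ and $m=\prod_{j\in W\setminus\sigma}x_j$, and $m$ lies in $k[\Delta]$ exactly when $\tau:=W\setminus\sigma$ is a face of $\Delta G[W]$. Thus homological degree $i$ acquires a basis indexed by faces $\tau$ of dimension $t-i-1$, and a direct check shows that the Koszul differential $e_\sigma\mapsto\sum_{j\in\sigma}\pm x_j e_{\sigma\setminus j}$ becomes, up to sign, the simplicial coboundary $\tau\mapsto\sum_j\pm(\tau\cup j)$. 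Matching the empty face $\tau=\emptyset$ (occurring at $\sigma=W$, homological degree $t$) with the augmentation yields $\operatorname{Tor}_i^S(k[\Delta],k)_{\mathbf{1}_W}\cong\tilde H^{\,t-i-1}(\Delta G[W];k)$. Since $k$ is a field, $\dim_k\tilde H^{\,d}=\dim_k\tilde H_d$, so summing over all $W$ with $|W|=t$ gives the ring formula; the shift $i\mapsto i+1$ then produces the stated formula for $I(G)$.

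The main obstacle I anticipate is the careful bookkeeping in this last identification: aligning the Koszul sign rule with the simplicial (co)boundary sign convention, tracking the homological shift so that faces of dimension $t-i-1$ land in the correct spot, and above all handling the augmentation term so that one genuinely obtains \emph{reduced} rather than unreduced (co)homology. The auxiliary claim that non-squarefree multidegrees contribute nothing, while standard, also requires making precise the sense in which the Koszul strand over such a degree is contractible.
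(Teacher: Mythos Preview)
The paper does not actually prove this theorem; it merely states Hochster's formula and refers the reader to Stanley's book \cite{MR1453579}*{Corollary 4.9} and the survey \cite{MR3213521}*{Theorem 7.11}. So there is no proof in the paper to compare against.

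That said, your outline is the standard Koszul-complex proof of Hochster's formula and is correct as a sketch. The reduction from $\beta_{i,t}(I(G))$ to $\beta_{i+1,t}(k[\Delta])$ via the long exact sequence in $\operatorname{Tor}$ is right (using that $\operatorname{Tor}^S_j(S,k)=0$ for $j>0$). The $\mathbb{Z}^n$-graded decomposition of $K_\bullet\otimes_S k[\Delta]$, the acyclicity of strands in non-squarefree multidegrees (one coordinate $\ge 2$ forces a cone structure on that strand), and the identification of the strand at $\mathbf{1}_W$ with the augmented cochain complex of $\Delta|_W$ are all standard and correctly described. Note that for a clique complex one indeed has $\Delta(G)|_W=\Delta(G[W])$, which is what makes the formula read in terms of induced subgraphs. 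The bookkeeping issues you flag (sign conventions, the role of the empty face in producing \emph{reduced} cohomology, and the degree shift matching $|\tau|=t-i$ so $\dim\tau=t-i-1$) are real but routine; none of them conceals a genuine obstacle. One small remark: your use of cohomology-equals-homology requires only that $k$ be a field, not that $\operatorname{char} k=0$; the paper's characteristic-zero assumption is there for other reasons and is not needed for Hochster's formula itself.
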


We refer to \cite{MR1453579}*{Corollary 4.9} and \cite{MR3213521}*{Theorem 7.11} for Hochster's formula.  We define regularity of a graph $G$ to be the Castelnuovo-Mumford regularity of the corresponding ideal $I(G)$ \cite{MR2560561}*{Definition 18.1}.
\begin{defn}
Let $\reg(G)$ be the Castelnuovo-Mumford regularity of the non-edge ideal $I(G)$ of graph $G$.
In detail, $\reg(G):=\max\{r|\beta_{i,i+r}(I(G))\not= 0 \text{ for some }i\}$. Regularity of the complete graph, which corresponds to the empty ideal, is one.
\end{defn}
By Hochster's formula we see that regularity of a graph $G$ is the smallest integer $q\geq 1$ such that $(q-2)$-nd (reduced) homology of the clique complex of any induced subgraph of $G$ is non-zero. Note that regularity of the Stanley-Reisner ring $k[\Delta]$ is one less than regularity of the non-edge ideal $I(G)$.

\section{Graph Decompositions} \label{sec:mainthm}
Since Betti numbers of a Stanley-Reisner ideal can be obtained by calculating homology of subcomplexes of the corresponding clique complex, two subgraphs that cover all cliques of $G$ give us enough information to bound regularity of $G$.

\begin{thm}\label{thm:maindecomp}
Let $G$ be a graph. Let $G_{1}$ and $G_{2}$ be subgraphs which cover cliques of $G$ (i.e. any clique of $G$ is a clique in either $G_{1}$ or else $G_{2}$.) Then, $$\reg (G)\leq\max\{\reg(G_{1}),\reg (G_{2}),\reg (G_{1}\cap G_{2})+1\}.$$
\end{thm}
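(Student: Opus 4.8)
The plan is to use Hochster's formula (Theorem~\ref{Hochster}) to reduce the statement to a purely topological claim about clique complexes, and then to apply a Mayer--Vietoris argument. First I would record the key combinatorial observation underlying the hypothesis: if $G_1$ and $G_2$ cover the cliques of $G$, then the clique complex decomposes as $\Delta(G) = \Delta(G_1) \cup \Delta(G_2)$ as simplicial complexes, and moreover $\Delta(G_1) \cap \Delta(G_2) = \Delta(G_1 \cap G_2)$. The inclusion $\Delta(G_1)\cup\Delta(G_2)\subseteq\Delta(G)$ is clear; the reverse inclusion is exactly the covering hypothesis (each face of $\Delta(G)$ is a clique of $G$, hence a clique of $G_1$ or of $G_2$). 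For the intersection, a face lies in both $\Delta(G_1)$ and $\Delta(G_2)$ iff it is a clique in $G_1$ and in $G_2$, which forces all its edges to lie in $E(G_1)\cap E(G_2) = E(G_1\cap G_2)$, i.e.\ it is a face of $\Delta(G_1\cap G_2)$; the converse is immediate.

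Next I would observe that this decomposition is inherited by induced subgraphs: for any vertex subset $W$, the graphs $G_1[W]$ and $G_2[W]$ still cover the cliques of $G[W]$, and $(G_1\cap G_2)[W] = G_1[W]\cap G_2[W]$. So it suffices to prove the homological statement for $G$ itself and then quote the reformulation of regularity via Hochster (regularity is the least $q\ge 1$ such that $\tilde H_{q-1}(\Delta(G[W])) = 0$ for all induced subgraphs). Concretely, set $r_1 = \reg(G_1)$, $r_2 = \reg(G_2)$, $r_{12} = \reg(G_1\cap G_2)$ and $r = \max\{r_1, r_2, r_{12}+1\}$; I want to show $\tilde H_{i}(\Delta(G[W])) = 0$ for all $W$ and all $i \ge r - 1$.

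The heart of the argument is the reduced Mayer--Vietoris sequence applied to the pair $\Delta(G_1[W])$, $\Delta(G_2[W])$ inside $\Delta(G[W])$:
\[
\cdots \to \tilde H_i(\Delta(G_1[W])) \oplus \tilde H_i(\Delta(G_2[W])) \to \tilde H_i(\Delta(G[W])) \to \tilde H_{i-1}(\Delta((G_1\cap G_2)[W])) \to \cdots.
\]
For $i \ge r-1$ we have $i \ge r_1 - 1$ and $i \ge r_2 - 1$, so the left term vanishes by the defining property of $r_1, r_2$; and $i - 1 \ge r - 2 \ge r_{12} - 1$, so the right term vanishes by the property of $r_{12}$. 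Exactness then forces $\tilde H_i(\Delta(G[W])) = 0$. Since $W$ was arbitrary, $\reg(G) \le r$, as claimed. One edge case to handle carefully is when $\Delta(G_1[W])$ and $\Delta(G_2[W])$ overlap in nonstandard ways (e.g.\ one is empty, or $W$ is empty) — there the reduced Mayer--Vietoris sequence needs the usual conventions, but these are routine to check.

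The main obstacle is really just the bookkeeping in the first step: verifying cleanly that the covering hypothesis yields $\Delta(G) = \Delta(G_1)\cup\Delta(G_2)$ \emph{together with} the clean intersection identity $\Delta(G_1)\cap\Delta(G_2) = \Delta(G_1\cap G_2)$, and that both survive passage to induced subgraphs. Once that is in place, the Mayer--Vietoris step is formal. (I would also note that the $+1$ shift is sharp-looking precisely because the connecting homomorphism drops the homological degree by one, which is why $\reg(G_1\cap G_2)$ enters with a $+1$ rather than on equal footing with $\reg(G_1)$ and $\reg(G_2)$.)
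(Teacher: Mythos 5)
Your proposal is correct and follows essentially the same route as the paper: identify $\Delta(G[W])$ as the union of the clique complexes of $W\cap G_1$ and $W\cap G_2$ with intersection $\Delta((G_1\cap G_2)[W])$, then apply the reduced Mayer--Vietoris sequence together with Hochster's reformulation of regularity as vanishing of reduced homology of induced subcomplexes. The degree bookkeeping (vanishing of the outer terms for $i\ge r-1$ and of the intersection term for $i-1\ge r_{12}-1$) matches the paper's argument exactly.
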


\begin{proof}
Let $W$ be an induced subgraph of $G$. Let $W_1=W\cap G_1$ and $W_2=W\cap G_2$.
We claim that a subcomplex of $\Delta W$ 
is the union of subcomplexes of $\Delta W_1$ and $\Delta W_2$. 
Let $F=(v_{1},\dots,v_{t})$ be a face in $\Delta W$.
Then $G(F)$ is a clique in $G$, and since $G_1$ and $G_2$ cover cliques of $G$, we see that $F$ is a face of either $W_1$ or $W_2$, and the claim follows.
Additionally, we have $\Delta(W_{1}\cap W_{2})=\Delta W_1 \cap \Delta W_2 $. 

Now, we prove the main inequality. Let $m=\max\{\reg(G_{1}),\reg(G_{2}),\reg(G_{1}\cap G_{2})+1\}$.
Given any induced subgraph $W$, by the Mayer-Vietoris sequence \cite{MR1867354}*{p.149}, we have
following exact sequence of complexes
\[
	\cdots\rightarrow\tilde{H}_{i}(\Delta(W_{1}\cap W_{2}))\rightarrow\tilde{H}_{i}(\Delta W_{1})\oplus \tilde{H}_{i}(\Delta W_{2})\rightarrow\tilde{H}_{i}(\Delta W)\rightarrow\tilde{H}_{i-1}(\Delta(W_{1}\cap W_{2}))\rightarrow\cdots
\]
 Since regularity of $G_{1} \cap G_{2}$ 
 is at most $m-1$, we have $\tilde{H}_{i}(\Delta (W_{1}\cap W_{2}))=0$ for all $i\geq m-2$. 
 Therefore, $\tilde{H}_{i}(\Delta W)\simeq\tilde{H}_{i}(\Delta W_{1}) \oplus \tilde{H}_{i}(\Delta W_{2})$ for all $i\geq m-1$. Since both $G_{1}$ and $G_{2}$ have regularity at most $m$, $\tilde{H}_{i}(\Delta W_{1})=\tilde{H}_{i}(\Delta W_{2})=0$ for all $i\geq m-1$. Thus, $\tilde{H}_{i}(\Delta W)=0$ for all $i\geq m-1$ and regularity of $G$ is at most $m$.
\end{proof}

Our first application deals with the case of defining $G_1$ and $G_2$ via a cutset.
\begin{thm}[Cut-set/Separator decomposition]\label{thm:separator}
 Let $T$ be an induced subraph of $G$ such that the induced graph $G\setminus T$ is disconnected. Let $C_{1},\dots,C_{k}$ be the connected components of $G\setminus T$ and $G_{i}$ be induced subgraphs on vertices of $C_{i}$ and $T$ for $i=1,\dots,k$. Then, $\reg(G)\leq\max\{\reg(G_{i})_{i=1,\dots,k},\reg(T)+1\}$.
\end{thm}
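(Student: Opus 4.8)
The plan is to deduce this directly from the main decomposition theorem (Theorem~\ref{thm:maindecomp}) by an induction on the number of connected components $k$ of $G\setminus T$. For the base case $k=2$, I would set $G_1$ to be the induced subgraph on $V(C_1)\cup V(T)$ and $G_2$ to be the induced subgraph on $V(C_2)\cup\dots\cup V(C_k)\cup V(T)$ (when $k=2$ this is just the induced subgraph on $V(C_2)\cup V(T)$). The key observation is that $G_1$ and $G_2$ cover all cliques of $G$: since there are no edges between distinct components $C_i$, any clique of $G$ must be contained in $V(C_i)\cup V(T)$ for a single $i$, hence lies in $G_1$ or in $G_2$. Moreover $G_1\cap G_2$ is exactly $T$, because a vertex lying in both $V(G_1)$ and $V(G_2)$ must be in $V(T)$, and no edge of $G$ outside $T$ can join two such vertices. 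Applying Theorem~\ref{thm:maindecomp} gives $\reg(G)\leq\max\{\reg(G_1),\reg(G_2),\reg(T)+1\}$.

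For the inductive step, I would apply the same splitting: with $G_2$ the induced subgraph on $V(C_2)\cup\dots\cup V(C_k)\cup V(T)$, note that in $G_2$ the set $T$ is still a separator whose removal leaves components $C_2,\dots,C_k$, so by the induction hypothesis $\reg(G_2)\leq\max\{\reg(G_i)_{i=2,\dots,k},\reg(T)+1\}$. Combining with the base-case inequality $\reg(G)\leq\max\{\reg(G_1),\reg(G_2),\reg(T)+1\}$ yields the claimed bound with all $k$ terms $\reg(G_i)$. One should also check the degenerate possibility that $G_1$ or $G_2$ could be complete (e.g., if some $C_i$ together with $T$ spans a clique), but this is harmless since $\reg$ of a complete graph is $1$ and the inequality only gets easier.

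The only genuinely substantive point is verifying the clique-covering condition and the identification $G_1\cap G_2 = T$; both follow immediately from the fact that $G\setminus T$ has no edges between distinct components $C_i$. I do not expect any real obstacle here — the theorem is essentially a clean corollary of Theorem~\ref{thm:maindecomp}, and the induction is routine bookkeeping. If one prefers to avoid induction entirely, an alternative is to observe that a face of $\Delta(G[W])$ for any induced $W$ lies in a single $\Delta(G_i[W])$, and run a Mayer--Vietoris / nerve argument directly on the cover $\{G_i\}_{i=1}^k$ glued along $T$; but packaging it through the two-piece decomposition is cleaner and reuses the work already done.
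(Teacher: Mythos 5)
Your proposal is correct and matches the paper's own argument: both split off $G_1 = G[V(C_1)\cup V(T)]$ against the induced subgraph on the remaining components plus $T$, verify the clique-covering condition and that the intersection is $T$, apply the main decomposition theorem, and then iterate (your induction on $k$ is just the paper's repeated peeling written formally). No gaps.
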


\begin{proof}
Let $G_1$ be the induced subgraph on vertices of $C_1$ and $T$ and let $G_{1}'$ be the induced subgraph on ${\displaystyle\cup_{i=2}^{k}V(C_{i})}\cup V(T)$. In other words, $G_{1}'=G\setminus C_{1}$. Then, we can see that $G_{1}$ and $G_{1}'$ cover all cliques of $G$. Indeed, if a vertex in $C_{1}$ and a vertex in ${\displaystyle\cup_{i=2}^{k}}C_{i}$ are contained in a clique in $G$, the induced subgraph on the two vertices must be an edge of $G$. However, it is not possible because $C_{1}$ and ${\displaystyle \cup_{i=2}^{k}}C_{i}$ are disjoint. Therefore, two induced subgraphs $G_{1}$ and $G_{1}'$ cover all cliques in $G$. Then, by Theorem \ref{thm:maindecomp}, we have
\[
\reg(G)\leq\max\{\reg(G_{1}),\reg(G_{1}'),\reg(T)+1\}.
\]
Now, let $G_{j}'$ be the induced subgraph on vertices of $C_{j+1},\dots,C_k$, and $T$ for $j=2,\dots,k$.
Then, by the same process,
\[
\reg(G_{j-1}')\leq\max\{\reg(G_{j}),\reg(G_{j}'),\reg(T)+1\}
\]
Thus,
\begin{align*}
\reg(G) & \leq \max\{\reg(G_{1}),\reg(G_{1}'),\reg(T)+1\}\\
 & \leq \max\{\reg(G_{1}),\reg(G_{2}),\reg(G_{2}'),\reg(T)+1\}\\
  &\vdots\\
 & \leq \max\{\reg(G_{i})_{i=1,\dots,k},\reg(T)+1\}.
\end{align*}

\end{proof}

We call $T$ in Theorem \ref{thm:separator} a separator of $G$.
Given any graph $G$, an open neighborhood of a vertex of $G$ is a separator of $G$, and so we obtain the following Vertex Neighborhood Decomposition.

\begin{cor}[Vertex Neighborhood Decomposition] \label{vertexdecomp}
Let $v$ be any vertex of a graph $G$.
Then, \[\reg(G)\leq\max\{\reg(G \setminus v),\reg(N_{G}(v))+1\}.\]
\end{cor}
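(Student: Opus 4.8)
The plan is to recognize the open neighborhood $N_G(v)$ as a separator of $G$ and then quote Theorem~\ref{thm:separator}. The key point is that deleting the vertices of $N_G(v)$ from $G$ leaves $v$ isolated, since all of its neighbors have been removed. Thus, as long as $V(G)\neq N_G[v]$, the induced subgraph $G\setminus N_G(v)$ is disconnected: one connected component is the single vertex $\{v\}$, and the others, call them $C_2,\dots,C_k$, are the connected components of $G\setminus N_G[v]$. Taking $T=N_G(v)$ in Theorem~\ref{thm:separator} yields
\[
\reg(G)\leq\max\{\reg(G_1),\reg(G_2),\dots,\reg(G_k),\reg(N_G(v))+1\},
\]
where $G_1$ is the induced subgraph on $\{v\}\cup V(N_G(v))=V(N_G[v])$ and, for $i\geq 2$, $G_i$ is the induced subgraph on $V(C_i)\cup V(N_G(v))$.

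Next I would control the terms $\reg(G_i)$. For $i\geq 2$ the graph $G_i$ is an induced subgraph of $G\setminus v$, so $\reg(G_i)\leq\reg(G\setminus v)$, because regularity only decreases when passing to an induced subgraph: by the homological characterization following Hochster's formula, $\reg$ is the least $q\geq 1$ for which $\tilde H_{q-1}$ of the clique complex of \emph{every} induced subgraph vanishes, and the induced subgraphs of $G_i$ are among the induced subgraphs of $G\setminus v$. For $G_1=N_G[v]$ I would use that $\reg(N_G[v])=\reg(N_G(v))$: in $N_G[v]$ the vertex $v$ is adjacent to every other vertex, so for any induced subgraph $W$ of $N_G[v]$ with $v\in W$ the clique complex $\Delta(W)$ is a cone with apex $v$ and has trivial reduced homology, while the induced subgraphs of $N_G[v]$ avoiding $v$ are exactly the induced subgraphs of $N_G(v)$; hence $N_G[v]$ and $N_G(v)$ impose the same homological vanishing and (as regularity is always at least $1$) have equal regularity. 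Substituting, the bound collapses to $\max\{\reg(G\setminus v),\reg(N_G(v)),\reg(N_G(v))+1\}=\max\{\reg(G\setminus v),\reg(N_G(v))+1\}$.

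It remains to dispose of the degenerate case $V(G)=N_G[v]$, where $v$ is adjacent to all other vertices and $G\setminus N_G(v)=\{v\}$ is connected, so Theorem~\ref{thm:separator} does not directly apply. Here $N_G(v)=G\setminus v$, and I would argue by hand: for any induced subgraph $W$ of $G$, if $v\in W$ then $\Delta(W)$ is a cone with apex $v$ and contributes no reduced homology, and if $v\notin W$ then $W$ is an induced subgraph of $G\setminus v$; hence $\reg(G)=\max\{1,\reg(G\setminus v)\}=\reg(G\setminus v)\leq\max\{\reg(G\setminus v),\reg(N_G(v))+1\}$.

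I do not expect a serious obstacle here: once $\{v\}$ is identified as a connected component of $G\setminus N_G(v)$, the heart of the argument is a single application of the separator decomposition together with monotonicity of regularity under induced subgraphs. The only care needed is the bookkeeping for the degenerate configurations (notably $v$ adjacent to all other vertices) and the standard fact, invoked above, that the open and closed neighborhoods of a vertex have the same regularity.
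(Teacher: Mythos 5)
Your proposal is correct and follows essentially the same route as the paper: apply the separator decomposition with $T=N_G(v)$ and then reduce $\reg(N_G[v])$ to $\reg(N_G(v))$ via the contractibility of cones over $v$ in Hochster's formula. You are somewhat more careful than the paper in collapsing the components other than $\{v\}$ into $\reg(G\setminus v)$ by monotonicity and in treating the degenerate case $V(G)=N_G[v]$, but these are just details the paper leaves implicit.
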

\begin{proof}
By Theorem \ref{thm:separator}, we have $\reg(G)\leq\max\{\reg(G \setminus v),\reg(N_{G}[v]),\reg(N_{G}(v))+1\}$, where $N_{G}(v)$ is the open neighborhood of $v$ in $G$ and $N_{G}[v]$ is the closed neighborhood of $v$ in $G$ (see Section \ref{sec:2} for definitions). So, it suffices to show that $\reg(N_{G}[v])=\reg(N_{G}(v))$. This follows by a simple application of Hochster's formula, since the clique complex $\Delta H$ of an induced subgraph $H$ of $N_{G}[v]$ with $v\in H$ is contractible.
\end{proof}

So far, we have only considered graph decompositions coming from induced subgraphs, but we now define a useful decomposition where this is not the case. 
Let $M$ be a subgraph of $G$. Let $G_{M}$ be the induced subgraph of $G$ on vertices in $M$ and vertices of $G$ which are adjacent to both vertices of some edge of $M$. 
Namely, $G_{M}=G[V(M)\cup W]$ where $W$ is a subset of vertices in $G$ such that $k\in W$ if $ik\in E(G)$ and $jk\in E(G)$ for some $ij\in E(M)$. Then, we have following decomposition theorem.

\begin{thm}\label{subgraphdecomp}
Let $M$ be a subgraph of a graph $G$.
Then, $\reg(G)\leq\max\{\reg(G-M), \reg (G_{M}),\reg(G_{M}-M)+1\}$ 
\end{thm}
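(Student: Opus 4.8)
The plan is to deduce this from the main decomposition theorem, Theorem \ref{thm:maindecomp}, applied to the pair of subgraphs $G_1 := G-M$ and $G_2 := G_M$. For this I need to check two things: that $G_1$ and $G_2$ cover all cliques of $G$, and that $G_1\cap G_2 = G_M - M$ (as graphs, i.e. on the level of vertex and edge sets). Once both are established, Theorem \ref{thm:maindecomp} immediately yields $\reg(G)\leq\max\{\reg(G-M),\reg(G_M),\reg(G_M-M)+1\}$.

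For the clique-covering step I would argue by cases on a clique $K$ of $G$. If $K$ uses no edge of $M$, then deleting the edges of $M$ does not affect $K$, so $K$ remains a clique of $G-M=G_1$. If instead $K$ contains some edge $ij\in E(M)$, then every other vertex $k$ of $K$ is adjacent in $G$ to both $i$ and $j$ (since $K$ is a clique), so $k\in W$; together with $i,j\in V(M)$ this shows $V(K)\subseteq V(M)\cup W = V(G_M)$, and since $G_M$ is an induced subgraph, $K$ is a clique of $G_M=G_2$. Hence $G_1$ and $G_2$ cover the cliques of $G$.

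For the intersection step, the vertex set of $G_1\cap G_2$ is $V(G)\cap V(G_M)=V(G_M)$, and the edge set is $\bigl(E(G)\setminus E(M)\bigr)\cap E(G_M)$. Because $G_M$ is induced, $E(G_M)=\{ab\in E(G): a,b\in V(G_M)\}$, and since $E(M)\subseteq E(G)$ with $V(M)\subseteq V(G_M)$ we have $E(M)\subseteq E(G_M)$; therefore this edge set is exactly $E(G_M)\setminus E(M)$, which is the edge set of $G_M-M$, whose vertex set is also $V(G_M)$. Thus $G_1\cap G_2=G_M-M$, and the theorem follows from Theorem \ref{thm:maindecomp}.

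There is no real obstacle here; the only point requiring a little care is the case analysis in the clique-covering step, specifically the observation that a clique of $G$ meeting an edge of $M$ must have \emph{all} its vertices inside $V(G_M)$ — this is where one uses that a clique is pairwise adjacent, so each of its vertices sees both endpoints of that edge. Everything else is routine bookkeeping with vertex and edge sets.
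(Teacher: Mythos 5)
Your proof is correct and follows essentially the same route as the paper: apply Theorem \ref{thm:maindecomp} to $G_1=G-M$ and $G_2=G_M$, verify clique covering by the same case split on whether the clique uses an edge of $M$, and identify $G_1\cap G_2$ with $G_M-M$ by the same vertex/edge bookkeeping. No gaps.
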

\begin{proof}
We first claim that $G-M$ and $G_{M}$ cover all cliques of $G$. Let $F$ be any clique in $G$. If $F$ does not contains any edges in $M$, then $G-M$ contains the clique $F$. Suppose that $F$ contains some edges of $M$. If all vertices in $F$ is contained in $M$, then $F$ is contained in $G_{M}$ since $G_M$ contains $M$. If $v$ is any vertex in $F$ outside of $M$, then $uv,wv\in F$ for some $uw\in E(F\cap M)$. This implies that $v\in V(G_{M})$ and so $F\subseteq G_{M}$ since both $F$ and $G_{M}$ are induced subgraphs of $G$. Additionally, the intersection of $G-M$ and $G_{M}$ is $G_{M}-M$. Indeed, $V(G_{M}\cap (G-M))=V(G_{M}\cap G)=V(G_{M})$ and $E(G_{M}\cap(G-M))=E(G_{M}-M)$. Thus, by Theorem \ref{thm:maindecomp},  $\reg(G)\leq\max\{\reg(G-M), \reg (G_{M}),\reg(G_{M}-M)+1\}$
\end{proof}

 Similarly to vertex-neighborhood decomposition in Corollary \ref{vertexdecomp}, if we take $M$ to be an edge $e=ij$ in Theorem \ref{subgraphdecomp}, then we can bound regularity of $G$ by regularity of two subgraphs.
 
 \begin{cor}[Edge-neighborhood decomposition]\label{edgedecomp}
Let $G$ be a graph and $e=ij$ be an edge in $G$. 
 Then, $\reg(G)\leq\max\{\reg(G-e),\reg(G_{e}-e)+1\}$.
\end{cor}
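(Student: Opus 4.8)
The plan is to specialize Theorem~\ref{subgraphdecomp} to the subgraph $M=e$ and then argue that the middle term of the resulting maximum is redundant. Taking $M$ to be the edge $e=ij$ (a subgraph with two vertices and the single edge $ij$), one checks directly from the definitions that $G-M=G-e$, that $G_M=G_e$, and that $G_M-M=G_e-e$. Thus Theorem~\ref{subgraphdecomp} yields immediately
\[ \reg(G)\le\max\{\reg(G-e),\ \reg(G_e),\ \reg(G_e-e)+1\}. \]
It therefore suffices to show $\reg(G_e)\le\reg(G_e-e)$, for then $\reg(G_e)$ is dominated by $\reg(G_e-e)+1$ and can be dropped, leaving exactly the asserted bound.

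To control $\reg(G_e)$, I would use that $V(G_e)=\{i,j\}\cup W$, where $W$ is the set of common neighbors of $i$ and $j$ in $G$. In $G_e$ the vertex $i$ is adjacent to $j$ (via $e$) and to every vertex of $W$ (by the definition of $W$), hence to every other vertex of $G_e$; the same is true of $j$. A vertex adjacent to all others lies in no non-edge, so the clique complex of any induced subgraph of $G_e$ that contains $i$ is a cone with apex $i$, hence contractible. By the homological description of regularity via Hochster's formula — the same contractibility computation already used in the proof of Corollary~\ref{vertexdecomp} — such induced subgraphs contribute no nonzero reduced homology, so $\reg(G_e)=\reg(G_e\setminus i)$. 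Repeating the argument with $j$, which is still adjacent to every vertex of $G_e\setminus i$, gives $\reg(G_e\setminus i)=\reg(G_e\setminus\{i,j\})=\reg(G[W])$.

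Finally, since $W\subseteq V(G_e-e)$ and deleting the edge $e=ij$ together with the vertices $i,j$ does not change the induced subgraph on $W$, the graph $G[W]$ is an induced subgraph of $G_e-e$. Regularity is monotone under passing to induced subgraphs — immediate from Hochster's formula, since every induced subgraph of $G[W]$ is also an induced subgraph of $G_e-e$ — so $\reg(G[W])\le\reg(G_e-e)$. Combining, $\reg(G_e)=\reg(G[W])\le\reg(G_e-e)$, and the display above collapses to $\reg(G)\le\max\{\reg(G-e),\ \reg(G_e-e)+1\}$.

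I do not expect a genuine obstacle here: the only delicate point is the claim that a universal vertex does not affect regularity, and this is precisely the cone/contractibility argument already carried out for Corollary~\ref{vertexdecomp}. The remainder is careful bookkeeping about which vertices and edges survive in $G-e$, in $G_e$, and in $G_e-e$, together with one application of the (trivial) monotonicity of regularity under induced subgraphs.
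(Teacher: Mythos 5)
Your proposal is correct and follows essentially the same route as the paper: specialize Theorem~\ref{subgraphdecomp} to $M=e$ and then show the $\reg(G_e)$ term in the resulting maximum is redundant. The only difference is in that last step --- the paper gets $\reg(G_e)\leq\reg(G_e\setminus i)+1\leq\reg(G_e-e)+1$ from Corollary~\ref{vertexdecomp} plus monotonicity, whereas you observe that $i$ and $j$ are universal vertices of $G_e$, so the cone argument gives the sharper $\reg(G_e)=\reg(G[W])\leq\reg(G_e-e)$; both suffice.
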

\begin{proof}

By Theorem \ref{subgraphdecomp}, it suffices to show that $\reg(G_{e})\leq\reg(G_{e}-e)+1$ for edge $e$. Indeed, for any graph $G$, $\reg(G)\leq\reg(G \setminus v)+1$ for any vertex $v$ by Corollary \ref{vertexdecomp} , and so we have 
\begin{align*}
\reg(G_{e}) & \leq  \reg(G_{e} \setminus i)+1 \\
 & \leq  \reg(G_{e} - e)+1,
\end{align*}
for the edge $e=ij$ because $G_{e}\setminus i$ is an induced subgraph of $G_{e}-e$.
\end{proof}
\noindent We will use this decomposition to describe complements of bipartite graphs that have regularity $3$ in Section \ref{sec:bipart}.

\section{Hereditary Families}\label{sec:hereditary}
Let $\mathscr{G}$ be a family of graphs. We call $\mathscr{G}$ a hereditary family if it is closed under taking induced subgraphs, or equivalently under deleting vertices.

\begin{thm}[Hereditary theorem] \label{thm:hereditary2} Let $\mathscr{G}$ be a hereditary family with the following property:
there exists $t\in \mathbb{N}$, such that for any $G \in \mathscr{G}$ there is a separator $G'$ of $G$ with $\reg (G') \leq t$. Then regularity of any $G \in \mathscr{G}$ is at most $t+1$.
\end{thm}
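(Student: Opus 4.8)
The plan is to induct on the number of vertices of $G$, using the Cut-set/Separator decomposition (Theorem~\ref{thm:separator}) as the inductive engine and the fact that $\mathscr{G}$ is closed under induced subgraphs to keep every piece produced by the decomposition inside $\mathscr{G}$.

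First I would dispose of the graphs that admit no separator. If a graph $G$ on at least two vertices has two non-adjacent vertices $u,w$, then deleting all the other vertices leaves the disconnected graph on $\{u,w\}$ with no edge, so $G\setminus G[V(G)\setminus\{u,w\}]$ is disconnected and $G$ does have a separator. Hence a graph with no separator is complete, and then $\reg(G)=1\leq t+1$, using that the complete graph has regularity one. So from this point on I may assume that $G$ is not complete, and by the hypothesis I can choose a separator $G'$ of $G$ with $\reg(G')\leq t$.

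For the inductive step, let $C_{1},\dots,C_{k}$ be the connected components of $G\setminus G'$, so that $k\geq 2$, and set $G_{i}=G[V(C_{i})\cup V(G')]$ for $i=1,\dots,k$. Each $C_{j}$ is nonempty and $G_{i}$ contains no vertex of $C_{j}$ for $j\neq i$, so every $G_{i}$ is an induced subgraph of $G$ on strictly fewer vertices; since $\mathscr{G}$ is hereditary, $G_{i}\in\mathscr{G}$, and the induction hypothesis gives $\reg(G_{i})\leq t+1$ for each $i$. Applying Theorem~\ref{thm:separator},
\[
\reg(G)\leq\max\{\reg(G_{1}),\dots,\reg(G_{k}),\,\reg(G')+1\}\leq t+1,
\]
which completes the induction.

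The substance of the argument is just Theorem~\ref{thm:separator} combined with closure of $\mathscr{G}$ under induced subgraphs, so I expect no deep obstacle; the only point requiring care is the bottom of the recursion. One must check that the decomposition genuinely strictly decreases the number of vertices — which is exactly why it matters that removing a separator leaves at least two nonempty components — and that the graphs to which the hypothesis does not apply, namely the complete graphs, already satisfy the desired bound.
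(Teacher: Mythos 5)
Your proof is correct and follows essentially the same route as the paper: induction on the number of vertices, with Theorem~\ref{thm:separator} applied to the hypothesized separator and the hereditary property keeping each piece $G_i$ in $\mathscr{G}$. Your explicit treatment of the bottom of the recursion (graphs with no separator are complete and have regularity $1$) is a sensible extra precaution that the paper's own proof leaves implicit.
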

\begin{proof}
Let $\mathscr{G}$ be a hereditary family with the above property for some $t\in \mathbb{N}$. We will induct on the number of vertices $n$ in graphs of $\mathscr{G}$. The base case $n=1$ is trivial, since $t\geq 0$ and $\mathscr{G}$ includes the one vertex graph. Now consider the inductive step. Let $G\in \mathscr{G}$ be a graph on $n+1$ vertices, and let $G'$ be a separator of $G$. Applying Theorem \ref{thm:separator} with $T=G'$, we get the desired inequality by the induction assumption.
\end{proof}

Chordal graphs form a hereditary family, and it is known in \cite{MR0130190} that any chordal graph contains a vertex $v$ such that neighborhood of $v$ is a complete graph. Therefore we immediately obtain the following result of Fr\"{o}berg:

\begin{cor}\label{cor:chordal}
Let $G$ be a chordal graph. Then regularity of $G$ is at most $2$.
\end{cor}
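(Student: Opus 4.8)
The plan is to apply the Hereditary Theorem (Theorem \ref{thm:hereditary2}) with $t=1$. For this I need two facts: that the family of chordal graphs is hereditary, and that every chordal graph has a separator $G'$ with $\reg(G')\leq 1$. The first fact is standard — an induced subgraph of a chordal graph is chordal, since any induced cycle in the subgraph is already an induced cycle in the ambient graph. The second fact is where the structural input comes in.

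For the separator, I would invoke the classical result (cited above as \cite{MR0130190}) that every chordal graph $G$ has a simplicial vertex $v$, i.e.\ a vertex whose open neighborhood $N_G(v)$ induces a complete graph. A complete graph has regularity $1$ by definition (its non-edge ideal is the zero ideal). Since the open neighborhood of a vertex is always a separator in the sense of Theorem \ref{thm:separator} — deleting $N_G(v)$ isolates $v$ from the rest of $G$, so $G\setminus N_G(v)$ is disconnected whenever $G$ has more than one vertex and is connected (and if $G$ is disconnected or a single vertex the bound is immediate) — we may take $G'=N_G(v)$, which has $\reg(G')=1\leq t$. One small point to handle cleanly: if $v$ happens to be adjacent to every other vertex, then $N_G(v)$ is not literally a separator because $G\setminus N_G(v)$ is a single vertex, hence "connected"; in that case, though, $G$ itself is a complete graph (its only induced cycles would have to live among the neighbors of $v$, but there are none of length $\geq 4$ by chordality, and actually $G=N_G[v]$ which is... not necessarily complete). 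So the honest route is: induct on the number of vertices directly using Corollary \ref{vertexdecomp} rather than quoting Theorem \ref{thm:hereditary2} verbatim.

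Concretely, I would argue by induction on $|V(G)|$. The base case of a single vertex (the complete graph $K_1$) has regularity $1\leq 2$. For the inductive step, pick a simplicial vertex $v$ of $G$. Then $N_G(v)$ induces a complete graph, so $\reg(N_G(v))=1$. Also $G\setminus v$ is an induced subgraph of a chordal graph, hence chordal, and has fewer vertices, so $\reg(G\setminus v)\leq 2$ by the induction hypothesis. By the Vertex Neighborhood Decomposition (Corollary \ref{vertexdecomp}),
\[
\reg(G)\leq\max\{\reg(G\setminus v),\ \reg(N_G(v))+1\}\leq\max\{2,\ 1+1\}=2,
\]
completing the induction.

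I do not expect any serious obstacle here; the only thing requiring care is the book-keeping about whether a simplicial vertex's neighborhood is genuinely a "separator" in the technical sense, which is why routing the argument through Corollary \ref{vertexdecomp} (vertex deletion) rather than the separator theorem is the cleanest presentation. The substantive content is entirely outsourced: Fulkerson--Gross (\cite{MR0130190}) supply the simplicial vertex, Corollary \ref{vertexdecomp} supplies the recursion, and the observation $\reg(K_m)=1$ closes the loop.
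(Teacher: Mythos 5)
Your proof is correct and is essentially the paper's argument: the paper likewise deduces the corollary from the fact that every chordal graph has a simplicial vertex (so its neighborhood is complete, of regularity $1$) combined with the hereditary-family induction, which is exactly the vertex-deletion recursion you carry out explicitly via Corollary \ref{vertexdecomp}. Your side remark about whether $N_G(v)$ is literally a separator when $v$ is adjacent to everything is a fair bookkeeping point, but it does not change the substance, since the paper's own route through the Hereditary Theorem ultimately rests on the same vertex decomposition you use.
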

Moreover, we can see that regularity of any hole is at least $3$ and therefore chordal graphs are the only graphs of regularity at most $2$.
On the other hand, by combining Fr\"{o}berg's result with neighborhood decomposition \ref{vertexdecomp} we can give a criterion for graphs that have regularity at most $3$:
\begin{cor} \label{cor:hereditary}
Let $\mathscr{G}$ be a hereditary family of graphs with the following property: for any $G \in \mathscr{G}$ there is a vertex $v$ of $G$ which has a chordal neighborhood. Then regularity of any $G \in \mathscr{G}$ is at most $3$.
\end{cor}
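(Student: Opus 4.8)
The plan is to combine the Vertex Neighborhood Decomposition (Corollary \ref{vertexdecomp}) with Fröberg's theorem (Corollary \ref{cor:chordal}) and run an induction on the number of vertices, exactly in the spirit of the proof of Theorem \ref{thm:hereditary2}. The base case $n=1$ is trivial, since a one-vertex graph is complete and has regularity $1 \leq 3$, and the one-vertex graph belongs to $\mathscr{G}$ (or we may simply start the induction at whatever small graphs $\mathscr{G}$ contains). For the inductive step, let $G \in \mathscr{G}$ have $n+1$ vertices. By hypothesis there is a vertex $v$ of $G$ whose neighborhood $N_G(v)$ is chordal.

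Next I would apply Corollary \ref{vertexdecomp} to this vertex $v$, obtaining
\[
\reg(G) \leq \max\{\reg(G \setminus v),\, \reg(N_G(v)) + 1\}.
\]
For the second term, since $N_G(v)$ is chordal, Corollary \ref{cor:chordal} gives $\reg(N_G(v)) \leq 2$, hence $\reg(N_G(v)) + 1 \leq 3$. For the first term, $G \setminus v$ is an induced subgraph of $G$, so $G \setminus v \in \mathscr{G}$ because $\mathscr{G}$ is hereditary, and it has $n$ vertices; by the induction hypothesis $\reg(G \setminus v) \leq 3$. Combining the two estimates yields $\reg(G) \leq \max\{3, 3\} = 3$, completing the induction.

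I do not anticipate a genuine obstacle here; the only point requiring a little care is making sure the induction is set up correctly, namely that the hereditary hypothesis is applied to $G\setminus v$ (so that it still lies in $\mathscr{G}$ and the induction hypothesis applies) and that Fröberg's bound is invoked for the neighborhood $N_G(v)$ rather than for $G$ itself. One should also note that the statement of Corollary \ref{vertexdecomp} and Fröberg's bound together already encode the arithmetic $2+1 = 3$, so no numerical work is needed. This argument is really just the $t = 2$ instance of Theorem \ref{thm:hereditary2}, using the fact that a chordal neighborhood is in particular a separator of regularity at most $2$; one could alternatively deduce the corollary directly from Theorem \ref{thm:hereditary2} by observing that $N_G(v)$ is a separator and $\reg(N_G(v)) \leq 2$, but carrying out the short induction explicitly makes the argument self-contained.
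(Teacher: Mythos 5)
Your proof is correct and is essentially the argument the paper intends: it combines the Vertex Neighborhood Decomposition (Corollary \ref{vertexdecomp}) with Fr\"oberg's bound (Corollary \ref{cor:chordal}) via induction on the number of vertices, which is exactly the $t=2$ instance of Theorem \ref{thm:hereditary2} that the paper invokes. No gaps; the observation that $G\setminus v$ remains in the hereditary family is the only point of care and you handle it correctly.
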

To illustrate the power of the above Corollary \ref{cor:hereditary}, we give a quick proof of a generalization of a result by Nevo \cite{MR2739498}*{Section 5}. Let $F'$ be a graph on four vertices consisting of an isolated vertex and a triangle. He showed that if $G$ does not contain $F'$ and a four-cycle as induced subgraphs then regularity of $G$ is at most three. We note that not containing a four-cycle as an induced subgraph corresponds to $G$ satisfying condition $N_{2,2}$.
Let $F$ be a graph on five vertices consisting of an isolated vertex and two triangles sharing an edge. We show that if $G$ does not contain a four-cycle and $F$ as induced subgraphs, then regularity of $G$ is at most $3$, which is a weaker condition on $G$. 
\begin{cor}\label{2-fan}
Let $\mathscr{G}$ be the hereditary family of graphs that do not contain $F$ and the four cycle as induced subgraphs. Then regularity of any $G \in \mathscr{G}$ is at most $3$. 
\end{cor}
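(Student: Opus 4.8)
The plan is to reduce the statement to Corollary~\ref{cor:hereditary}. Since $\mathscr{G}$ is defined by two forbidden induced subgraphs ($F$ and the four-cycle), it is automatically hereditary, so it suffices to show that \emph{every} $G\in\mathscr{G}$ contains a vertex whose neighborhood is chordal. I will establish this by contradiction: assume $G$ is four-cycle-free and $F$-free but that no vertex of $G$ has a chordal neighborhood. Fix any vertex $v$. Then $G[N_G(v)]$ is not chordal, hence contains an induced cycle of length at least $4$; since $G$ has no induced four-cycle this cycle $a_1,\dots,a_k$ has $k\ge 5$, and every $a_i$ is adjacent to $v$.

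The heart of the argument is the claim that $v$ is a \emph{universal} vertex of $G$ (adjacent to all other vertices). To prove it, suppose some $u\neq v$ is not adjacent to $v$, and put $S=\{\,i\in\mathbb{Z}/k : u\sim a_i\,\}$. For every $j$ the set $\{v,a_j,a_{j+1},a_{j+2}\}$ induces a diamond: it carries the triangles $va_ja_{j+1}$ and $va_{j+1}a_{j+2}$ sharing the edge $va_{j+1}$, while $a_j a_{j+2}$ is a non-edge because $k\ge 5$. Since $G$ is $F$-free and $u$ lies outside this diamond (as $u\not\sim v$ while each $a_i\sim v$), the vertex $u$ must have a neighbor in it; as $u\not\sim v$ this gives $S\cap\{j,j+1,j+2\}\neq\emptyset$ for every $j$. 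In particular $S\neq\emptyset$ and $S$ meets every window of three consecutive indices. On the other hand, if $i,j\in S$ with $a_i\not\sim a_j$, then $a_i,v,a_j,u$ would induce a four-cycle (using $u\not\sim v$), which is impossible; hence every two indices of $S$ are consecutive on the cycle, so $S$ is contained in a single edge $\{\ell,\ell+1\}$. But then $S\cap\{\ell+2,\ell+3,\ell+4\}=\emptyset$ since $k\ge 5$, contradicting the previous sentence. Therefore $v$ is universal.

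This applies to an arbitrary vertex $v$, so under our assumption every vertex of $G$ is universal, i.e.\ $G$ is complete. But then every neighborhood of $G$ is itself complete, hence chordal, contradicting the assumption that no neighborhood is chordal. Thus every $G\in\mathscr{G}$ has a vertex with chordal neighborhood, and Corollary~\ref{cor:hereditary} yields $\reg(G)\le 3$. (Equivalently, once such a vertex $v$ is found one may conclude directly by induction on $|V(G)|$: Corollary~\ref{vertexdecomp} gives $\reg(G)\le\max\{\reg(G\setminus v),\,\reg(N_G(v))+1\}$, with $\reg(N_G(v))\le 2$ by Fr\"{o}berg's bound (Corollary~\ref{cor:chordal}) and $\reg(G\setminus v)\le 3$ by the inductive hypothesis, since $G\setminus v\in\mathscr{G}$.)

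The step I expect to be the main obstacle is the universality claim, as it is the only place where both forbidden subgraphs are used simultaneously. One must check carefully that the diamonds $\{v,a_j,a_{j+1},a_{j+2}\}$ are genuinely \emph{induced} — which needs $k\ge 5$, and is precisely why the four-cycle has to be excluded as a separate hypothesis — and that the two resulting constraints on $S$ (hitting every three consecutive indices, yet lying inside one edge of the cycle) are incompatible exactly when $k\ge 5$.
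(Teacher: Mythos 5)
Your proof is correct and follows essentially the same route as the paper's: reduce to Corollary~\ref{cor:hereditary}, find a hole of length at least $5$ in $N_G(v)$, use $F$-freeness to force a non-neighbor of $v$ to attach to the hole, and then extract an induced four-cycle. The only cosmetic difference is that the paper takes $v$ of minimal degree to get a non-neighbor immediately, whereas you show every vertex would have to be universal; your window argument also spells out the step the paper leaves implicit (why the non-neighbor must see two non-adjacent vertices of the hole).
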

\begin{proof} We will show that any $G \in \mathscr{G}$ contains a vertex with a chordal neighborhood. Suppose not, and let $G \in \mathscr{G}$ be a graph such that no vertex of $G$ has a chordal neighborhood. Let $v$ be the vertex of minimal degree in $G$. Observe that $v$ is not connected to all vertices of $G$, otherwise $G$ is the complete graph, which is a contradiction. It follows by our assumption that $N_G(v)$ contains a hole $C$ of length at least $5$, and there exists $w\in G$ such that $v$ is not connected to $w$. Since $G$ is $F$-free we see that $w$ must be connected to two non-adjacent vertices $u_1, u_2$ of $C$. But then the induced subgraph on $u_1,v,u_2, w$ is a $4$-cycle, which is a contradiction.
\end{proof}

We also generalize Corollary \ref{2-fan} to the case where $G$ does not contain larger cycles as induced subgraphs. Recall that a graph $G$ not containing an $\ell$-hole for $\ell=4,...,p+2$ with $p\geq 2$ is equivalent to $G$ satisfying condition $N_{2,p}$. Let a fan $F_{i}$  for $i\geq 1$ be the graph consisting of an isolated vertex and the graph join of a path on $i+1$ vertices and a distinct vertex. With essentially the same proof as Corollary \ref{2-fan} we can also show the following:
\begin{cor}\label{l-fan}
If for some $i\geq 2$ a graph $G$ is $\ell$-hole free for $\ell=4,...,i+2$ and does not contain $F_{i}$ as an induced subgraph, then regularity of $G$ is at most $3$.

\end{cor}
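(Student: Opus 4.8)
The plan is to follow the strategy of Corollary~\ref{2-fan} essentially verbatim, replacing the single $4$-cycle obstruction by one that also uses the absence of the longer holes of lengths $5,\dots,i+2$. Write $\mathscr{G}$ for the family in the statement; it is hereditary, since being $\ell$-hole free and being $F_i$-free are both inherited by induced subgraphs. So by Corollary~\ref{cor:hereditary} it suffices to show that every $G\in\mathscr{G}$ has a vertex with a chordal neighborhood. Suppose not, and let $v$ be a vertex of minimal degree in such a $G$. If $v$ were adjacent to every other vertex, minimality of $\deg(v)$ would force $G=K_n$, whose neighborhoods are complete hence chordal; so there is a vertex $w$ with $v\not\sim w$. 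Since $N_G(v)$ is not chordal it contains an induced cycle, i.e.\ a hole $C$, which is also a hole of $G$; as $G$ has no hole of length $4,\dots,i+2$, we conclude $|C|\geq i+3$ (here the hypothesis $i\geq 2$ enters, so that $4\in\{4,\dots,i+2\}$).

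Next I would pin down how $w$ meets $C$. If $w$ were adjacent to two vertices $u_1,u_2$ of $C$ that are non-consecutive along $C$, then $u_1\not\sim u_2$ (as $C$ is induced), while $u_1,u_2\sim v$ and $v\not\sim w$, so $\{u_1,v,u_2,w\}$ would induce a $4$-cycle, contradicting $4$-hole freeness. Hence the neighbors of $w$ in $C$ form a clique; since $|C|\geq 5$, a clique of $C$ has at most two vertices and they are consecutive. Therefore at most two vertices of $C$ are adjacent to $w$ and they span a single edge, so $C$ contains a run of at least $(i+3)-2=i+1$ consecutive vertices none of which is adjacent to $w$; call them $u_1,u_2,\dots,u_{i+1}$ in order along $C$.

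Finally one checks that $\{v,u_1,\dots,u_{i+1},w\}$ induces $F_i$, which is the contradiction. Indeed $u_1-u_2-\cdots-u_{i+1}$ is an induced path: two of these vertices are adjacent in $C$ precisely when their indices differ by $1$, since the complementary distance along $C$ between $u_k$ and $u_l$ is at least $|C|-i\geq 3$, and in particular $u_1\not\sim u_{i+1}$ because $i\geq 2$. Moreover $v$ is adjacent to all of $u_1,\dots,u_{i+1}$ as $C\subseteq N_G(v)$, and $w$ is adjacent to none of $v,u_1,\dots,u_{i+1}$ by construction. Thus the induced subgraph is the join of $P_{i+1}$ with the single vertex $v$, together with the isolated vertex $w$, i.e.\ exactly $F_i$, contradicting $F_i$-freeness. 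Hence some vertex of $G$ has a chordal neighborhood, and Corollary~\ref{cor:hereditary} gives $\reg(G)\leq 3$.

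The only delicate point is the last step: the bound $|C|\geq i+3$ must be used twice --- once to extract $i+1$ consecutive non-neighbors of $w$ from $C$, and once to guarantee that this run induces a path (its two endpoints being non-adjacent in $G$) rather than a shorter cycle --- and the hypothesis $i\geq 2$ must be carried along both for the $4$-hole argument and for the non-adjacency of $u_1$ and $u_{i+1}$. Everything else is a routine transcription of the $i=2$ case.
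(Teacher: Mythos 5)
Your proof is correct and follows exactly the route the paper intends: it is the argument of Corollary~\ref{2-fan} (hereditary family, minimal-degree vertex, a hole $C$ of length at least $i+3$ in $N_G(v)$, and the observation that $4$-hole-freeness forces the neighbors of $w$ on $C$ to be a clique, leaving a run of $i+1$ consecutive vertices that together with $v$ and $w$ induce $F_i$), with the details the paper leaves to the reader filled in carefully. Nothing further is needed.
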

It is known that if $G$ is perfect and does not contain $4$-holes or if $G$ is even-hole free, then there is a vertex in $G$ whose neighborhood is chordal (for $4$-free perfect graphs see \cite{MR1852504} and for even-hole free graphs see \cite{MR2292535}). Moreover, both $4$-hole free perfect graphs and even-hole free graphs form hereditary families. Thus, we obtain another criterion to make graphs to have regularity $3$.
\begin{cor}\label{even,odd holefree}
If $G$ is perfect and does not contain $4$-holes, or if $G$ is even-hole free then regularity of $G$ is at most $3$.
\end{cor}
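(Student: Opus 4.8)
The plan is to reduce Corollary \ref{even,odd holefree} to Corollary \ref{cor:hereditary}. For each of the two families --- $4$-hole free perfect graphs and even-hole free graphs --- I must check two things: that the family is hereditary, and that every graph in the family has a vertex with a chordal neighborhood. Both facts are essentially quoted from the literature, so the proof will be short.

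First I would verify that the $4$-hole free perfect graphs form a hereditary family. Perfection is defined in terms of every induced subgraph (a graph is perfect iff $\chi = \omega$ holds for it and all its induced subgraphs), so the class of perfect graphs is closed under vertex deletion by definition; likewise, being $4$-hole free is an induced-subgraph condition, hence hereditary. The intersection of two hereditary families is hereditary, so the first family qualifies. Similarly, even-hole freeness is the condition that no induced subgraph is an even cycle of length $\geq 4$, which is manifestly closed under taking induced subgraphs, so the even-hole free graphs form a hereditary family.

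Next I would invoke the two structural results. For $4$-hole free perfect graphs, the cited result \cite{MR1852504} gives that every such graph contains a vertex whose (open) neighborhood induces a chordal graph. For even-hole free graphs, the cited result \cite{MR2292535} gives the same conclusion: there is a vertex with a chordal neighborhood. In either case the hypothesis of Corollary \ref{cor:hereditary} is satisfied with $\mathscr{G}$ the family in question, so regularity of every $G$ in that family is at most $3$.

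There is no real obstacle here: the only point requiring any care is making sure that ``chordal neighborhood'' in the quoted graph-theory papers means exactly what Corollary \ref{cor:hereditary} requires --- namely that the induced subgraph $N_G(v)$ is chordal in the sense used throughout this paper (no induced cycle of length $\geq 4$) --- and that, since $\reg$ of a chordal graph is at most $2$ by Corollary \ref{cor:chordal}, the chordal-neighborhood hypothesis of Corollary \ref{cor:hereditary} is met. Then one simply writes: each of the two families is hereditary and has the vertex-with-chordal-neighborhood property, so Corollary \ref{cor:hereditary} applies and gives $\reg(G) \leq 3$.
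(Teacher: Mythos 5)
Your proposal is correct and matches the paper's argument exactly: the paper likewise observes that both families are hereditary, cites \cite{MR1852504} and \cite{MR2292535} for the existence of a vertex with a chordal neighborhood, and concludes via Corollary \ref{cor:hereditary}. Nothing further is needed.
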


It follows from the Strong Perfect Graph Theorem \cite{MR2233847}, that $G$ is perfect and $4$-hole free if and only if $G$ is $4$-hole free and also odd-hole free. Thus Corollary \ref{even,odd holefree} implies that if $G$ is $4$-hole free, and regularity of $G$ is at least $4$, then $G$ must contain both even and odd holes. This observation is used for improving a bound on regularity in Section \ref{sec:hole-free}.

\section{Complements of bipartite graphs} \label{sec:bipart}
Fern\'{a}ndez-Ramos and Gimenez gave an explicit description of bipartite graphs associated to edge ideals that have regularity $3$ in \cite{MR3199032}. 
We give an independent proof of their result by using Edge Neighborhood Decomposition. Since we consider non-edge ideals, we work with complements of bipartite graphs.

Let $G$ be the complement of a bipartite graph $H$ with bipartition of vertices $X$ and $Y$. Let $B$ be the subgraph of $G$ with $V(B)=V(G)$ and the edge set consisting of edges of $G$ between vertices in $X$ and vertices in $Y$. We call $B$ the bipartite part of $G$. We recall chordal bipartite graphs \cite{MR2063679}*{Section 12.4}.

\begin{defn}
A chordal bipartite graph is a bipartite graph which contains no induced cycles of length greater than four.
\end{defn}

It is shown in \cite{MR493395} that any chordal bipartite graph $G$ with bipartition of vertices $X$ and $Y$ contains an edge $ij$ for $i\in X$ and $j\in Y$ such that the induced subgraph on vertices of $N_{G}(i)$ and $N_{G}(j)$ is a complete bipartite graph.
Such an edge $ij$ is called a \textit{bisimplicial} edge. 
Additionally, it is known in \cite{MR493395} that the subgraph $G-ij$ is again a chordal bipartite graph.  This implies that subgraphs obtained by deleting a bisimplicial edge from a chordal bipartite graph are also chordal bipartite graphs. 

Combining Corollary \ref{edgedecomp} with property of chordal bipartite graph, we get an exact description of complements of bipartite graphs of regularity $3$.

\begin{thm} \label{thm:bipartite} 
Let $G$ be the complement of a bipartite graph. Regularity of $G$ is $3$ if and only if $G$ contains a hole and the bipartite part $B$ of $G$ is chordal bipartite.
\end{thm}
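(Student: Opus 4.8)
The plan is to prove the two directions separately, using the Edge-neighborhood decomposition (Corollary~\ref{edgedecomp}) and Fröberg's theorem (Corollary~\ref{cor:chordal}) for one direction, and an explicit construction of a homology class for the other.

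\textbf{The ``if'' direction.} Suppose $G$ is the complement of a bipartite graph $H$ with parts $X, Y$, that $G$ contains a hole (so $\reg(G) \geq 3$), and that the bipartite part $B$ is chordal bipartite. We must show $\reg(G) \leq 3$. I would induct on the number of edges in $B$. If $B$ has no edges, then $G$ is a disjoint union of two cliques (one on $X$, one on $Y$), which is chordal, so $\reg(G) \leq 2 \leq 3$ by Corollary~\ref{cor:chordal}. For the inductive step, pick a bisimplicial edge $e = ij$ of $B$ (with $i \in X$, $j \in Y$), which exists since $B$ is chordal bipartite. By Corollary~\ref{edgedecomp}, $\reg(G) \leq \max\{\reg(G-e), \reg(G_e - e) + 1\}$. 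The key computations are: (1) $G - e$ is again the complement of a bipartite graph whose bipartite part is $B - e$, which is still chordal bipartite by the cited result of Golumbic--Goss, so $\reg(G-e) \leq 3$ by induction (the ``contains a hole'' hypothesis is irrelevant for the upper bound). (2) I claim $G_e - e$ is chordal, hence has regularity at most $2$. Here $G_e$ is the induced subgraph on $i$, $j$, and the common neighbors of $i$ and $j$ in $G$; the point of bisimpliciality is that in $B$, $N_B(i) \cup N_B(j)$ induces a complete bipartite graph, which forces the common $G$-neighbors of $i$ and $j$ to split into a set fully adjacent structure making $G_e - e$ a union of cliques glued appropriately — I expect it to be chordal, and this is the step I'd need to verify carefully. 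Then $\reg(G_e - e) + 1 \leq 3$, completing the induction.

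\textbf{The ``only if'' direction.} Suppose $\reg(G) = 3$. Since $\reg(G) \geq 3 > 2$, $G$ is not chordal by Corollary~\ref{cor:chordal}, so $G$ contains an induced cycle of length $\geq 4$, i.e. a hole. It remains to show $B$ is chordal bipartite, i.e. $B$ has no induced cycle of length $\geq 6$. Suppose for contradiction that $B$ contains an induced (in $B$) cycle $C$ of length $2k \geq 6$. The vertices of $C$ alternate between $X$ and $Y$. Now I would analyze the induced subgraph $G[V(C)]$ in $G$: since $G$ is the complement of $H$ and $B$ consists of the $X$-$Y$ edges of $G$, within $V(C)$ the non-edges of $G$ are exactly the non-edges of $C$ between $X$-side and $Y$-side, while all $X$-$X$ pairs and all $Y$-$Y$ pairs are edges of $G$. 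One checks that $G[V(C)]$ is then (the complement of $C_{2k}$), and its clique complex has nonvanishing reduced homology in degree $2$ — indeed the complement of an even cycle $C_{2k}$ for $k \geq 3$ is well known to have a non-trivial $\tilde H_2$ (this is essentially the statement that $\reg$ of the complement of $C_{2k}$ is at least $3$; more precisely one shows $\Delta(\overline{C_{2k}})$ is homotopy equivalent to a wedge of $2$-spheres). This would give $\beta_{i,i+4} \neq 0$ for some $i$ via Hochster's formula, hence $\reg(G) \geq 4$, a contradiction. So $B$ must be chordal bipartite.

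\textbf{Main obstacle.} The hardest part is the homological computation in the ``only if'' direction: showing that the clique complex of the complement of an even cycle $C_{2k}$ with $2k \geq 6$ has nonzero second reduced homology. One should be careful that an induced cycle of $B$ need not be induced in $G$ only in the ``expected'' way — but since the only edges of $G$ between the two sides are the edges of $B$ itself (by definition of the bipartite part), $G[V(C)]$ is determined and equals $\overline{C_{2k}}$ with the two color classes made complete. A clean approach is to exhibit an explicit $2$-cycle in the simplicial chain complex (e.g. the boundary of a suitable triangulated sphere supported on $V(C)$) and show it is not a boundary, or to invoke a known description of $\Delta(\overline{C_n})$. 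A secondary obstacle is verifying in the ``if'' direction that $G_e - e$ is chordal; this follows by unwinding the definition of $G_e$ together with the completeness of $N_B(i) \cup N_B(j)$, but it requires care about which vertices of $G$ are common neighbors of both $i$ and $j$.
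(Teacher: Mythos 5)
Your ``if'' direction is exactly the paper's argument: induction on the number of edges of $B$, a bisimplicial edge $e$, Corollary~\ref{edgedecomp}, and the Golumbic--Goss fact that $B-e$ stays chordal bipartite. The step you flag as needing care is in fact easy: the common $G$-neighbours of $i$ and $j$ are precisely $(N_B(j)\setminus\{i\})\subseteq X$ and $(N_B(i)\setminus\{j\})\subseteq Y$, and since $X$ and $Y$ are cliques of $G$ while $B[N_B(i)\cup N_B(j)]$ is complete bipartite, $G_e$ is a complete graph; hence $G_e-e$ is $K_r$ minus an edge, which is chordal.

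The ``only if'' direction, however, has a genuine error. If $C$ is an induced $2k$-cycle of $B$ with colour classes $X'$, $Y'$, then inside $G[V(C)]$ the cycle edges of $C$ \emph{are} edges of $G$ and the non-edges are exactly the $X'$--$Y'$ pairs \emph{not} on $C$; so $G[V(C)]$ is $C_{2k}$ with both colour classes completed, i.e.\ the complement of the \emph{bipartite} complement of $C$ in $K_{k,k}$ --- not $\overline{C_{2k}}$. For $2k=6$ the non-edges form a perfect matching and $G[V(C)]$ is the octahedron $K_{2,2,2}$, whereas $\overline{C_6}$ is $3$-regular. Worse, the fact you invoke about $\overline{C_{2k}}$ is itself false: $\Delta(\overline{C_n})$ is the independence complex of $C_n$, which is $S^1\vee S^1$ for $n=6$ and a wedge of $3$-spheres for $n=12$, so $\tilde H_2$ vanishes for infinitely many even $n$. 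Thus the key homological claim is unproved (though the conclusion $\tilde H_2(\Delta(G[V(C)]))\neq 0$ is true for the correct graph). The paper closes this gap differently: it takes $C$ to be a \emph{shortest} hole of $B$ of length $\geq 6$, sets $G'=G[V(C)]$, covers the cliques of $G'$ by $N_{G'}[v]$ and $G'\setminus v$ for a vertex $v\in X'$, checks that both pieces have vanishing $\tilde H_1$ (the first is a cone; minimality of $C$ is used for the second) while $N_{G'}(v)$ contains a $4$-hole and hence has $\tilde H_1\neq 0$, and concludes $\tilde H_2(\Delta(G'))\neq 0$ by Mayer--Vietoris surjectivity. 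You would need to replace your identification and homology citation by an argument of this kind (or an explicit non-bounding $2$-cycle in the correct complex).
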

\begin{proof}
Suppose that the complement $G$ of a bipartite graph $H$ has at least one hole and the bipartite part $B$ of $G$ is a chordal bipartite graph. Since $G$ contains at least one hole, regularity of $G$ is at least $3$. To show that regularity of $G$ is at most $3$ we induct on the number of edges $\ell$ in $B$. The base case $\ell=0$ is simple, since $G$ is then chordal and therefore $\reg(G) \leq 2$. Now we consider the induction step.
Let $G$ be the complement of a bipartite graph such that its bipartite part $B$ is a chordal bipartite graph with $\ell+1$ edges.
Then $B$ contains a bisimplicial edge $e$. By Theorem \ref{edgedecomp}, 
\[
\reg(G)\leq\max\{\reg(G-e),\reg(G_{e}-e)+1\}.
\]
Since $e$ is a bisimplicial edge in $B$, $G_{e}-e$ is a chordal graph, and $\reg(G_{e}-e)\leq 2$. Additionally, $\reg(G-e)\leq 3$ by the induction assumption, and the desired result follows.

Conversely, suppose that bipartite part $B$ of $G$ contains a hole of length at least $6$. We claim that $\Delta G$ contains a subcomplex whose $2$nd (reduced) homology is not zero. Let $G'$ be the subgraph of $G$ induced by vertices that form the shortest hole in $B$. Let $X'$ and $Y'$ be the partitions of vertices $G'$ (induced from the partition of vertices of $G$). 
Let $v$ be any vertex of $X'$. Then, the closed neighborhood $N_{G'}[v]$ and the deletion $G' \setminus v$ of $v$ cover cliques of $G'$. Observe that $\widetilde{H}_{1}(\Delta N_{G'}[v])=\widetilde{H}_{1}(\Delta (G'\setminus v))=0$ since $\Delta N_{G'}[v]$ is contractible, and any hole in $G'-v$ is covered by cliques of size $3$, but $\widetilde{H}_{1}(\Delta N_{G'}(v))\not=0$ since $N_{G'}(v)$ contains a hole (of length $4$). Since $\widetilde{H}_{2}(\Delta G')\rightarrow\widetilde{H}_{1}(\Delta N_{G'}(v))$ is surjective by the Mayer-Vietoris sequence, $\widetilde{H}_{2}(\Delta G')\not= 0$ , and this implies that regularity of $G$ is at least $4$.
\end{proof}

\section{Regularity and Genus}
\label{sec:genus}
The following bound on regularity is well-known in \cite{MR1417301}*{Lemma 2.1} (or see \cite{MR3249840} for a geometric proof), but we provide a short proof for the sake of completeness.
\begin{lem}\label{lem:genus}
If the number of vertices of $G$ is at most $2n-1$, then regularity of $G$ is at most $n$.
\end{lem}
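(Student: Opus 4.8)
The plan is to use Hochster's formula together with the vertex decomposition of Corollary \ref{vertexdecomp} and induct on $n$. By Hochster's formula (Theorem \ref{Hochster}) and the characterization of regularity via reduced homology of clique complexes, it suffices to show that for every induced subgraph $W$ of $G$ (so $|V(W)| \leq 2n-1$) and every $q \geq n$, we have $\tilde{H}_{q-1}(\Delta(W)) = 0$. Since the class of graphs on at most $2n-1$ vertices is hereditary, it is enough to prove $\tilde{H}_{n-1}(\Delta(G)) = 0$ for all $G$ with $|V(G)| \leq 2n-1$ and all $n$; the statement for higher homology and for induced subgraphs then follows by applying the result with larger $n$ (or simply by noting the bound $\reg(G) \leq n$ is what we want and it suffices to kill $\tilde H_{q-1}$ for $q\ge n$, each of which is an instance of the same claim with $n$ replaced by $q$).

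First I would set up the induction on $n$. The base case $n = 1$ says a graph on at most $1$ vertex has regularity at most $1$: the clique complex of a single vertex (or the empty graph) is contractible or empty, so $\tilde H_0 = 0$, consistent with $\reg = 1$ by convention. For the inductive step, pick any vertex $v$ of $G$. By Corollary \ref{vertexdecomp},
\[
\reg(G) \leq \max\{\reg(G \setminus v), \reg(N_G(v)) + 1\}.
\]
Now $G \setminus v$ has at most $2n - 2 \leq 2n-1$ vertices, so by induction $\reg(G \setminus v) \leq n$ — wait, this only gives $\leq n$, not $\leq n-1$, so I need to be more careful. The right move is to choose $v$ to be a vertex of \emph{minimum degree}. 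If every vertex has degree $\geq n$, then... that does not immediately help either. Instead, the key combinatorial observation is: in a graph on at most $2n-1$ vertices, there is a vertex $v$ such that \emph{either} $\deg(v) \leq n - 2$ \emph{or} $G$ itself is ``small enough'' in a usable sense. Actually, the clean argument: let $v$ have minimum degree $d$. Then $N_G(v)$ has $d$ vertices and $G \setminus v$ has at most $2n - 2$ vertices.

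The honest plan, then, is a refined induction. I claim: if $|V(G)| \leq 2n-1$ then $\reg(G) \leq n$, proved by induction on $|V(G)|$ (not on $n$). Pick a vertex $v$ of minimum degree $\delta$. Then $|V(G\setminus v)| \leq 2n-2 = 2(n-1) \cdot 1 + \ldots$ — hmm, $2n-2 \leq 2n - 1$ so induction gives $\reg(G \setminus v) \leq n$, and $|V(N_G(v))| = \delta$. If $\delta \leq 2(n-1) - 1 = 2n - 3$, induction gives $\reg(N_G(v)) \leq n-1$, so $\reg(N_G(v)) + 1 \leq n$ and we are done. Otherwise $\delta \geq 2n - 2$, meaning every vertex has degree $\geq 2n-2$ in a graph on $\leq 2n-1$ vertices; then $G$ is the complete graph $K_{2n-1}$ (or $K_{2n-2}$), whose non-edge ideal is zero, so $\reg(G) = 1 \leq n$. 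The main obstacle is exactly this case analysis on the minimum degree and making sure the ``$G$ is complete'' escape hatch is airtight — i.e., that a graph on $m \leq 2n-1$ vertices with minimum degree $\geq 2n-2 \geq m-1$ is forced to be complete, which is immediate since $\delta \leq m - 1$ always. So I would present the two-case induction on the number of vertices, invoking Corollary \ref{vertexdecomp} in the generic case and the triviality of $\reg(K_m) = 1$ in the degenerate case.
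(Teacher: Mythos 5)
Your proof is correct and follows essentially the same route as the paper: apply the vertex neighborhood decomposition (Corollary \ref{vertexdecomp}) at a minimum-degree vertex, use the complete-graph escape when the minimum degree is too large, and induct. The only cosmetic difference is that you induct on the number of vertices and bound $\reg(G\setminus v)\leq n$ directly from the induction hypothesis, whereas the paper first deletes vertices that do not change regularity (a ``trimming'') so that only $\reg(N_G(v))+1$ needs to be controlled; both are valid.
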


\begin{proof}
We use induction on $n$. For $n=1$, regularity is obviously at most $1$ since there are no generators in the non-edge ideal of the graph. Assume that any graph with at most $2\ell-1$ vertices has regularity at most $\ell$. Let $G$ be a graph on $2\ell+1$ vertices. Note that by Corollary \ref{vertexdecomp} we can delete a vertex $v$ without changing regularity if $\reg(G)>\reg(N_G(v))+1$. After deleting such vertices, if possible, let $v$ be the vertex of minimal degree in $G$. If the degree of $v$ is $2\ell$, then $G$ is a complete graph (which has regularity $1$). Therefore, we can assume that degree of $v$ is at most $2\ell-1$. Then, we have  \[ \reg(G)\leq \reg(N_G(v))+1\leq \ell+1, \]
since $N_G(v)$ contains at most $2\ell-1$ vertices.
\end{proof}
In fact, the bound in Lemma \ref{lem:genus} is tight. Let $K_{n(2)}$ be the complete $n$-partite graph, with each part of size two. Since the ideal of $K_{n(2)}$ is a complete intersection of $n$ quadrics, its minimal resolution is given by the Koszul complex. Thus regularity of $K_{n(2)}$ is $n+1$.
We also note that $K_{n(2)}$ is a perfect graph on $2n$ vertices.

Recall that the \textit{genus} of a graph $G$ is the minimal genus of an orientable surface $S_g$ into which $G$ can be embedded (See \cite{MR1852593} for reference). Note that any graphs can be embedded into an orientable surface $S_g$ for some genus $g$ and the genus of graphs inscribes a topological complexity of graphs. By using the Lemma \ref{lem:genus}, we can immediately give an alternative proof of a result in \cite{MR3249840} that any planar graphs have regularity at most $4$ and it is tight. We note that this is the case of genus $0$ and we can provide bounds on regulairty of graphs in terms of arbitrarily genus.

\begin{thm}\label{thm:genus}
    Let $g$ be the genus of a graph $G$. Then, regularity of $G$ is at most $\lfloor1+\sqrt{1+3g}\rfloor+2$.
\end{thm}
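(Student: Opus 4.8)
The plan is to combine the vertex-neighborhood decomposition (Corollary~\ref{vertexdecomp}) with the Lemma~\ref{lem:genus} bound, exploiting the fact that a graph embedded in $S_g$ has a vertex of small degree (an Euler-characteristic/sparsity estimate), and that the neighborhood of such a vertex is itself embeddable in $S_g$. First I would recall that a simple graph embeddable in $S_g$ on $m$ vertices has at most $3m-6+6g$ edges, so by a standard averaging argument its minimum degree $\delta$ satisfies $\delta \le 6 + \frac{6(g-1)}{m} \le 6 + 6g$ when $m$ is not too large, and more usefully $\delta = O(\sqrt{g})$ once $m$ is large — this is the source of the $\sqrt{g}$ term. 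The precise statement I want is that if $G$ is not complete then $G$ has a vertex $v$ with $\deg(v)$ bounded by roughly $2(1+\sqrt{1+3g})$; I would derive this by balancing the two regimes ($m$ small versus $m$ large) in the edge bound $|E(G)| \le 3m - 6 + 6g$.

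Next I would set up the induction on $g$ (or directly on the claimed quantity). Write $r(g) = \lfloor 1 + \sqrt{1+3g}\rfloor + 2$. By Corollary~\ref{vertexdecomp}, we may repeatedly delete vertices $v$ with $\reg(G) > \reg(N_G(v)) + 1$ without changing $\reg(G)$; after no more such deletions are possible, either $G$ is complete (regularity $1$, done) or every vertex $v$ satisfies $\reg(G) \le \reg(N_G(v)) + 1$. In the latter case pick $v$ of minimum degree. The key point: $N_G(v)$ is an induced subgraph of $G$, hence embeds in $S_g$, and it has only $\deg(v)$ vertices. If $\deg(v) \le 2(1 + \sqrt{1+3g}) - 1$, then by Lemma~\ref{lem:genus} (with $n = 1 + \lfloor \sqrt{1+3g}\rfloor$, so $2n-1 \ge \deg(v)$) we get $\reg(N_G(v)) \le 1 + \lfloor \sqrt{1+3g}\rfloor$, and therefore $\reg(G) \le \lfloor 1 + \sqrt{1+3g}\rfloor + 2 = r(g)$, as desired.

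So the whole theorem reduces to the combinatorial claim: \emph{a non-complete graph of genus $g$ has a vertex of degree at most $2\lfloor\sqrt{1+3g}\rfloor + 1$} (or some bound of this shape sufficient to feed Lemma~\ref{lem:genus}). I expect this degree estimate to be the main obstacle — not because it is deep, but because getting the constants to line up exactly with $\lfloor 1 + \sqrt{1+3g}\rfloor + 2$ requires care. The natural approach: from $|E| \le 3m-6+6g$ and $\delta \le 2|E|/m$ we get $\delta \le 6 - 12/m + 12g/m$; this is useless for large $m$ directly, so instead one should observe that if $\delta$ is large then $m \delta/2 \le |E| \le 3m - 6 + 6g$ forces $m(\delta - 6) \le 12g - 12$, i.e. $m \le \frac{12(g-1)}{\delta - 6}$ when $\delta > 6$; combined with $m \ge \delta + 1$ this yields $(\delta+1)(\delta-6) \le 12(g-1)$, a quadratic in $\delta$ whose solution is $\delta \le \tfrac{5}{2} + \sqrt{\tfrac{25}{4} + 12g - 12} = O(\sqrt{g})$. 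I would then check (possibly with a small case analysis for small $g$, where the floor function matters most) that this bound is at most $2\lfloor\sqrt{1+3g}\rfloor+1$, completing the argument; if the constants are slightly off one can absorb the slack into the inductive/deletion step or sharpen the edge bound using that $N_G(v)$, being a neighborhood, may itself be assumed triangle-light or otherwise constrained.
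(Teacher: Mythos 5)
Your overall strategy is exactly the one the paper uses: combine the Euler-formula sparsity of graphs embedded in $S_g$ to find a vertex $v$ of degree $O(\sqrt{g})$, bound $\reg(N_G(v))$ by Lemma~\ref{lem:genus} since $N_G(v)$ has few vertices, and conclude via the vertex/separator decomposition on the hereditary family of genus-$\le g$ graphs. However, the constants you are aiming at are off by enough to make your key intermediate claim false, not merely loose. To get $\reg(G)\le \lfloor 1+\sqrt{1+3g}\rfloor+2$ you need $\reg(N_G(v))\le \lfloor 1+\sqrt{1+3g}\rfloor+1$, so in Lemma~\ref{lem:genus} you should take $n=\lfloor 1+\sqrt{1+3g}\rfloor+1$, which requires $\deg(v)\le 2n-1=2\lfloor 1+\sqrt{1+3g}\rfloor+1$. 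You instead take $n=1+\lfloor\sqrt{1+3g}\rfloor$ and reduce to the claim that a non-complete graph of genus $g$ has a vertex of degree at most $2\lfloor\sqrt{1+3g}\rfloor+1$; for $g=0$ this asserts minimum degree at most $3$ for non-complete planar graphs, which the icosahedron ($\delta=5$) refutes. (It is also a red flag that your branch would yield $\reg(G)\le\lfloor 1+\sqrt{1+3g}\rfloor+1$, one better than the theorem.) The correct threshold $2\lfloor 1+\sqrt{1+3g}\rfloor+1$ \emph{does} follow from your Euler computation, provided you use $|V|\ge d+2$ (valid since $G$ is not complete) rather than $|V|\ge \delta+1$: then $(d+2)(d-6)\le 12(g-1)$ gives $d^2-4d-12g\le 0$, hence $d\le 2+2\sqrt{1+3g}$, and $\lfloor 2+2\sqrt{1+3g}\rfloor\le 2\lfloor 1+\sqrt{1+3g}\rfloor+1$ always holds. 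This is precisely the computation in the paper's proof; with that correction your argument closes.
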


\begin{proof}
	Let $|V|$ be the number of vertices, $|E|$ be the number of edges, and $|F|$ be the number of (2-dimensional) faces in the embedding of $G$. By considering the Euler characteristic of the surface $S$ into which $G$ is embedded, we see that $|V|-|E|+|F|=2-2g$. Recall that ${\displaystyle 2|E|=\sum_{v\in V}\deg(v)=\sum_{F\in\Delta_{2}}\ell_{F}}$ where $\Delta_{2}$ is the set of 2-cells in the embedding and $\ell_{F}$ is the number of edges in the face $F$. In particular, ${\displaystyle 2|E|=\sum_{F\in\Delta_{2}}\ell_{F}\geq3|F|}$ since $\ell_{F}\geq3$ for any face $F$. Let $d$ be the minimal degree of $G$. Then, ${\displaystyle 2|E|=\sum_{v\in V}\deg(v)}\geq d|V|$.
	Therefore, 
	\begin{align*}
	2-2g & =  |V|-|E|+|F| \\
	& \leq  |V|-|E|+\frac{2}{3}|E| = |V|-\frac{1}{3}|E|\\
	& \leq  |V|-\frac{d}{6}|V| = \frac{6-d}{6}|V|.
	\end{align*}
	Moreover, we can see that $|V|\geq d+2$ since $d\leq\deg(v)\leq|V|-2$. (Note that, if $d=|V|-1$, the graph is complete graph, which can be excluded) Thus, 
	\[ 6(2g-2)\geq(d-6)|V|\geq(d-6)(d+2)\Rightarrow0\geq d^{2}-4d-12g. \]
	This implies that $d\leq2+\sqrt{4+12g}=2+2\sqrt{1+3g}$. Let $v$ be the vertex of degree $d$. Then, $\reg(N_G(v))\leq\lfloor\frac{1}{2}\lfloor2+2\sqrt{1+3g}\rfloor\rfloor+1=\lfloor1+\sqrt{1+3g}\rfloor+1$. By Theorem \ref{thm:hereditary2}, $\reg(G)\leq\lfloor1+\sqrt{1+3g}\rfloor+2$.
\end{proof}

Note that this bound is indeed tight. It is known in \cite{MR0349461}*{Section 4.4} that the genus of $2$-regular complete $n$-bipartite graphs $K_{n(2)}(=K_{2,2,\dots,2})$ is at least  $\frac{(n-3)(n-1)}{3}$. Moreover, the genus of $K_{n(2)}$ is exactly $\frac{(n-3)(n-1)}{3}$   if $n\not\equiv 2\mod 3$ by \cite{MR0485485}. In this case, we have $\reg(K_{n(2)})=n+1$ and the right hand side of inequality in Theorem \ref{thm:genus} is $\lfloor1+\sqrt{1+3\frac{(n-3)(n-1)}{3}}\rfloor+2=n+1$.

\section{Bounds on regularity of graphs without small holes} \label{sec:hole-free}
Even though regularity of a graph can depend linearly on the number of vertices $n$, if $G$ does not contain small holes, then regularity of $G$ can be bounded from above by a logarithmic function of $n$. It was shown in \cite{MR2188445} that absence of small holes corresponds to the ideal satisfying property $N_{2,p}$ for some $p\geq 2$.

\begin{thm}
	Let $p\geq 2$ and $I(G)$ be the non-edge ideal corresponding to a graph $G$. Then, the followings are equivalent.
    \begin{enumerate}
	\item The minimal graded free resolution of $I(G)$ is $(p-1)$-step linear.
	\item The graph $G$ does not contain a hole $C_{i}$ of length $i$ for $i\leq p+2$.
	\item $I(G)$ satiesfies $N_{2,i}$ for all $2\leq i\leq p$.
 	\end{enumerate}

\end{thm}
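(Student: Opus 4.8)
# Proof Proposal

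The plan is to establish the three-way equivalence via the Eisenbud--Green--Hulek--Popescu correspondence together with Hochster's formula. The core object is the graded Betti number $\beta_{i,j}(I(G))$: a minimal graded free resolution being \emph{$(p-1)$-step linear} means precisely that $\beta_{i,j}(I(G)) = 0$ whenever $1 \le i \le p-1$ and $j \neq i+2$, i.e.\ the first $p-1$ syzygy modules (counting the generators as the $0$-th) are generated in the expected linear degrees. I would begin by recording this as the working definition, and likewise unwind that $I(G)$ satisfying $N_{2,i}$ for all $2 \le i \le p$ is, by the definition given in the introduction, the assertion that $I(G)$ is generated by quadrics and the first $i-1$ steps of the resolution are linear for each such $i$ — which collapses to the same vanishing condition as $(p-1)$-step linearity. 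So the equivalence $(1) \Leftrightarrow (3)$ is essentially a matter of matching definitions, and I would dispatch it first with a short paragraph, observing that "generated by quadrics" is automatic since $I(G)$ is by construction generated by squarefree monomials of degree $2$.

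The substance is the equivalence of $(1)$ (equivalently $(3)$) with $(2)$, and here I would invoke Hochster's formula (Theorem~\ref{Hochster}): $\beta_{i,t}(I(G)) = \sum_{|W|=t}\dim_k \tilde H_{t-i-2}(\Delta(G[W]))$ for $t \ge i+2$. The resolution is $(p-1)$-step linear iff for every $i$ with $1 \le i \le p-1$ and every $t > i+2$ we have $\beta_{i,t}(I(G)) = 0$, i.e.\ $\tilde H_{t-i-2}(\Delta(G[W])) = 0$ for all $W$ with $|W| = t$; reindexing, this says $\tilde H_m(\Delta(G[W])) = 0$ whenever $m \ge 1$ and $|W| \le m + p + 1$ — equivalently, the clique complex of every induced subgraph on at most $m+p+1$ vertices has vanishing reduced homology in degree $m$ (for all $m \ge 1$). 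Rather than re-derive the hole characterization from scratch, I would cite \cite{MR2188445}*{Theorem 2.1}, which is exactly the statement that $I(G)$ satisfies $N_{2,p}$ iff $G$ has no hole of length $\le p+2$; combined with the definition-chasing of the first paragraph this closes the loop $(2) \Leftrightarrow (3) \Leftrightarrow (1)$.

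If instead a self-contained argument for $(1) \Leftrightarrow (2)$ is wanted, I would argue both directions through the homological reformulation above. For $(2) \Rightarrow (1)$: assuming $G$ has no hole of length $\le p+2$, one shows $\tilde H_m(\Delta(G[W])) = 0$ for $m \ge 1$ and $|W| \le m+p+1$ — the relevant induced subgraphs are too small to "wrap around," and one can run an induction using the vertex decomposition (Corollary~\ref{vertexdecomp}) and the Mayer--Vietoris sequence from the proof of Theorem~\ref{thm:maindecomp}, peeling off a vertex $v$ from $G[W]$: the closed-neighborhood piece is contractible, so $\tilde H_m(\Delta(G[W])) \cong \tilde H_m(\Delta(G[W]\setminus v)) \oplus (\text{connecting term into } \tilde H_{m-1}(\Delta N_{G[W]}(v)))$, and both summands vanish by induction on $|W|+m$ since a minimal-length hole in a neighborhood lifts to a slightly longer hole in $G[W]$. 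For $(1) \Rightarrow (2)$ (contrapositive): if $G$ contains a hole $C$ of length $\ell \le p+2$, then $\Delta(C)$ is homotopy equivalent to a circle, so $\tilde H_1(\Delta(C)) \neq 0$; taking $W = V(C)$, $t = \ell$, and $i = \ell - 3 \le p - 1$, Hochster's formula gives $\beta_{i,i+3}(I(G)) \neq 0$ (there is nothing to cancel, as each hole contributes positively), contradicting $(p-1)$-step linearity.

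The main obstacle is the $(2) \Rightarrow (1)$ direction if done from scratch: one must control $\tilde H_m$ of clique complexes of \emph{all} small induced subgraphs simultaneously, and the bookkeeping in the double induction on $|W|$ and $m$ (making sure the hole-length budget is respected when passing to a vertex neighborhood, where a hole of length $k$ becomes a hole of length $k+1$ in the ambient graph containing an extra adjacent vertex) is the delicate point. Given that Theorem~\ref{thm:maindecomp} and Corollary~\ref{vertexdecomp} are already available, however, the cleanest route is simply to cite \cite{MR2188445} for $(2) \Leftrightarrow (3)$ and keep the remainder of the proof to the definitional identifications.
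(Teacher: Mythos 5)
Your proposal is correct and takes essentially the same route as the paper: the paper gives no proof of this theorem, presenting it as a restatement of \cite{MR2188445}*{Theorem 2.1} together with the definitional identification of $(p-1)$-step linearity of a quadratically generated ideal with property $N_{2,p}$, which is precisely your primary argument. Your optional self-contained sketch of $(2)\Rightarrow(1)$ goes beyond what the paper does, and you correctly flag it as the delicate part and fall back on citing \cite{MR2188445}, which is all that is required.
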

It was shown in \cite{MR3070118} that if $G$ satisfies $N_{2,p}$ for $p \geq 2$, then $$\reg(G) \leq \log_{\frac{p+3}{2}}\frac{n-1}{p}+3. $$ We also provide (a similar and) asymptotically better upper bound on regularity of graphs.
\begin{thm}\label{thm:holefree}
	Suppose that $G$ satisfies property $N_{2,p}$ for $p\geq 2$. Then, 
	\[ 
    \reg(G)\leq \min \{\log_{\frac{p+3}{2}}\big(\frac{n(p+1)}{p(p+3)}\big)+3,\log_{\frac{p+4}{2}}\big(\frac{n(p+2)}{(p+1)(p+4)}\big)+4 \}.
    \]
\end{thm}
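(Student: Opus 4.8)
The plan is to prove the two bounds separately and then take the minimum. The first bound, $\reg(G)\le\log_{(p+3)/2}\big(n(p+1)/(p(p+3))\big)+3$, should follow by essentially the argument of Dao--Huneke--Schweig but with a more careful accounting of vertex counts, so I would focus on the second bound, $\reg(G)\le\log_{(p+4)/2}\big(n(p+2)/((p+1)(p+4))\big)+4$, which is the genuinely new estimate. The overall strategy is induction on the number of vertices $n$, using the Vertex Neighborhood Decomposition (Corollary~\ref{vertexdecomp}): $\reg(G)\le\max\{\reg(G\setminus v),\reg(N_G(v))+1\}$. As in the proof of Lemma~\ref{lem:genus}, we may first delete any vertex $v$ with $\reg(G)>\reg(N_G(v))+1$ without changing $\reg(G)$; after this reduction every vertex satisfies $\reg(G)\le\reg(N_G(v))+1$, and in particular we may pass to the vertex $v$ of minimum degree $d$ and work with $N_G(v)$, which has exactly $d$ vertices. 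The key point is that property $N_{2,p}$ is inherited by induced subgraphs (no new holes are created by deleting vertices), and moreover the neighborhood $N_G(v)$ of a vertex in an $N_{2,p}$-graph satisfies the stronger property $N_{2,p+1}$: a hole of length $p+2$ inside $N_G(v)$ together with $v$ would (since $v$ is adjacent to every vertex of the hole) produce, after choosing a chord-giving vertex, a short hole in $G$, contradicting $N_{2,p}$. This "$p\mapsto p+1$ when passing to a neighborhood" gain is exactly what powers the logarithmic recursion, and iterating it once more gives the improved base of $(p+4)/2$.

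The heart of the argument is a counting lemma bounding the minimum degree $d$ in terms of $n$ and $p$. The idea: pick the minimum-degree vertex $v$; its neighborhood $N_G(v)$ has $d$ vertices and satisfies $N_{2,p+1}$, and recursively inside $N_G(v)$ we again find a vertex of controlled degree. One sets up a recursion of the shape $n(r)\ge \frac{p+2}{2}\,n(r-1)+(\text{constant depending on }p)$, where $n(r)$ is the least number of vertices of an $N_{2,p}$-graph of regularity $r$; solving this geometric recursion with the additive constant chosen so that the base case (regularity $4$, i.e. after stripping off three neighborhoods down to something of regularity $\le 1$ forced by $N_{2,p}$-type structure) matches, yields $n\ge \frac{(p+1)(p+4)}{p+2}\big(\frac{p+4}{2}\big)^{\reg(G)-4}$, which is equivalent to the claimed inequality. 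The additive constants $p(p+3)/(p+1)$ and $(p+1)(p+4)/(p+2)$ in the two bounds are precisely the "boundary terms" that come from how many vertices a short-hole-free graph of small regularity must have — e.g. using that a graph satisfying $N_{2,p}$ with regularity $\ge 2$ contains an induced hole of length $\ge p+3$, hence at least $p+3$ vertices.

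The main obstacle I anticipate is making the degree-counting lemma precise enough to land the exact constants rather than merely the right logarithmic order. Two technical subtleties require care: first, verifying cleanly that $N_G(v)$ satisfies $N_{2,p+1}$ (the chord-vertex argument must be stated so that it works for every hole length $4,\dots,p+3$ in $N_G(v)$, using that $v$ is universal to that hole); second, handling the "delete vertices with $\reg(N_G(v))+1<\reg(G)$" reduction so that it interacts correctly with the inductive hypothesis — one must ensure that after the reduction the minimum-degree vertex still certifies $\reg(G)=\reg(N_G(v))+1$ and that $N_G(v)$, now of regularity $\reg(G)-1$ and satisfying $N_{2,p+1}$, has its vertex count bounded below by the inductive estimate one level up. Once the recursion $n\big(\reg,\,p\big)\ge \tfrac{p+2}{2}\,n\big(\reg-1,\,p\big)+c_p$ is established with the correct $c_p$, both displayed bounds drop out by elementary manipulation, and taking the minimum finishes the theorem.
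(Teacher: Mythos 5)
Your plan rests on a claim that is false: that in a graph satisfying $N_{2,p}$ the open neighborhood $N_G(v)$ of a vertex satisfies the stronger property $N_{2,p+1}$. Take $G$ to be the wheel consisting of a $(p+3)$-cycle together with a universal vertex $v$. The only hole of $G$ is the $(p+3)$-cycle itself, so $G$ satisfies $N_{2,p}$; yet $N_G(v)=C_{p+3}$ is a hole of length $p+3$, so $N_G(v)$ fails $N_{2,p+1}$. A hole in $N_G(v)$ together with the universal vertex $v$ is just a wheel and creates no shorter induced cycle, so the ``chord-giving vertex'' step you invoke has nothing to work with. Since this ``$p\mapsto p+1$ upon passing to a neighborhood'' gain is precisely what you use to push the base of the logarithm from $\frac{p+3}{2}$ to $\frac{p+4}{2}$, your argument for the second bound collapses. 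The paper obtains the longer hole by a completely different mechanism: once the regularity of the relevant ($4$-hole-free) subgraph is at least $4$, Corollary~\ref{even,odd holefree} (a consequence of the Strong Perfect Graph Theorem together with the even-hole-free result) forces it to contain both an even and an odd hole, so besides a shortest hole of length $m\geq p+3$ there is a hole of length $m+2\alpha+1\geq p+4$, and it is this parity dichotomy that yields the base $\frac{p+4}{2}$.

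The quantitative engine is also missing. You only describe a recursion ``of the shape'' $n(r)\geq\frac{p+2}{2}\,n(r-1)+c_p$ without deriving the multiplicative factor, and as written the factor $\frac{p+2}{2}$ is inconsistent with the solution $n\geq\frac{(p+1)(p+4)}{p+2}\bigl(\frac{p+4}{2}\bigr)^{\reg(G)-4}$ you claim it produces. In the paper the factor does not come from the minimum degree alone: it comes from a degree-sum count over a single hole $C_m$ that persists inside every iterated trimmed neighborhood $G_i$. Every vertex of $C_m$ is adjacent to the minimum-degree vertex $x_{i}$ of the previous stage, so a vertex outside the closed neighborhood of $x_{i}$ that sees two nonadjacent vertices of $C_m$ would close up a $4$-hole through $x_{i}$; hence such a vertex sees at most two vertices of $C_m$, and comparing $\sum_{v\in C_m}\deg(v)$ with $m\,d_{i}$ forces roughly $\frac{m}{2}\,|G_{i+1}|$ many vertices outside $N_{G_{i}}[x_{i}]$, i.e.\ $|G_{i}|\gtrsim\frac{m}{2}|G_{i+1}|$. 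Without this (or an equivalent) counting argument neither the factor $\frac{p+3}{2}$ nor $\frac{p+4}{2}$ is established, so both displayed bounds remain unproved; deferring the first bound to ``essentially the argument of Dao--Huneke--Schweig with more careful accounting'' does not supply the sharper constant $\frac{n(p+1)}{p(p+3)}$ either.
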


\begin{proof}
	Given a graph $G$, there is an induced subgraph $G_{0}$ such that $\reg(G)=\reg(G_{0})=\reg(N_{G_{0}}(v))+1$ for any vertex $v$ in $G_{0}$. 
 Indeed, we can keep deleting vertices $y$ such that $\reg(G)= \reg(G\setminus y)$ until we arrive at a graph $G_0$, where $\reg(G_0\setminus v)=\reg(G_0)-1$ for any vertex $v$ of $G$.	
    Then, by Corollary \ref{vertexdecomp} we have $\reg(G_{0})=\reg(N_{G_{0}}(v))+1$ for any vertex $v$ in $G_{0}$.
 We call such $G_0$ a trimming of $G$. Note that a trimming is not unique.

 Let $x_{0}$ be a vertex of minimal degree in $G_{0}$. Let $G_1$ be a trimming of the open neighborhood $N_{G_0}(x_0)$ of $x_0$ in $G_0$. Now we repeat this process: let $x_i$ be a vertex of minimal degree in $G_i$ and let $G_{i+1}$ be a trimming of the open neighborhood of $x_i$ in $G_i$.
 We obtain a sequence of induced subgraphs $G_{i}$ of $G$ such that
	\[ \reg(G)=\reg(G_{0})=\reg(G_{1})+1=\cdots=\reg(G_{t})+t .\]
 Let $\ell$ be the maximal integer such that $G_{\ell}$ contains a hole, and let $C_m$ be the hole in $G_{\ell}$ of smallest length $m$, with $m\geq p+3\geq 5$. Note that $C_m$ is a hole that is present in all graphs $G_i$, with $0\leq i\leq \ell$.
 We use $d_{i}$ to denote the degree of $x_{i}$ in $G_{i}$.

  We claim that for $1\leq i \leq \ell$ the sum of the degrees of vertices of $C_m$ in $N_{G_{\ell-i}}[x_{\ell-i}]$ is at most \[md_{\ell-i}-\frac{m^i(m-3)}{2^{i-1}},\] which we prove by induction on $i$. The base case is $i=1$: a vertex of $C_m$ is connected to exactly two vertices of $C_m$ and can be connected to all other vertices in $N_{G_{\ell-1}}[x_{\ell-1}]$.
 Therefore, the sum of degrees of vertices of $C_m$ is at most  $2m+m(d_{\ell-1}+1-m)=md_{\ell-1}-m(m-3)$. 
 
 For the inductive step, assume that the sum of the degrees of vertices of $C_m$ in $N_{G_{\ell-i+1}}[x_{\ell-i+1}]$ is at most $md_{\ell-i+1}-\frac{m^{i-1}(m-3)}{2^{i-2}}$. Observe that any vertex in $G_{\ell-i+1}$ not connected to $x_{\ell-i+1}$ can be adjacent to at most two vertices of $C_m$. Otherwise $G_{\ell-i+1}$ is forced to have a $4$-hole, which is a contradiction.
 Since degree of $x_{\ell-i+1}$ in $G_{\ell-i+1}$ is at least the degree of any vertex of $C_m$ is $G_{\ell-i+1}$ we see that
  there are at least
   \begin{equation}\label{eqn:extravert}\frac{1}{2} \left(md_{\ell-i+1}-(md_{\ell-i+1}-\frac{m^{i-1}(m-3)}{2^{i-2}})\right)=\frac{m^{i-1}(m-3)}{2^{i-1}}\end{equation} vertices in $G_{\ell-i+1}\setminus N_{G_{\ell-i+1}}[x_{\ell-i+1}]$.
   
    Any vertex of $N_{G_{\ell-i}}[x_{\ell-i}]$ belongs to exactly one of  $N_{G_{\ell-i}}[x_{\ell-i}]\setminus G_{l-i+1}$, or $G_{l-i+1}\setminus N_{G_{l-i+1}}[x_{l-i+1}]$, or $N_{G_{l-i+1}}[x_{l-i+1}]$. 
  As before, any vertex of  $G_{l-i+1} \setminus N_{G_{\ell-i+1}}[x_{\ell-i+1}] $ can be adjacent to at most two vertices in $C_m$, and a vertex of $C_m$ can be adjacent to all vertices of $N_{G_{\ell-i}}[x_{\ell-i}]\setminus G_{\ell-i+1}$. Therefore, 
    \begin{align*}
		\sum_{v\in C_{m}}\deg_{N_{G_{\ell-i}}[x_{\ell-i}]}(v) & \leq & m|N_{G_{\ell-i}}[x_{\ell-i}]\setminus G_{\ell-i+1}|+2|G_{\ell-i+1} \setminus N_{G_{\ell-i+1}}[x_{\ell-i+1}]|+\sum_{v\in C_{m}}\deg_{N_{G_{\ell-i+1}}[x_{\ell-i+1}]}(v). \\
	\end{align*}
	Using the induction assumption on $\sum_{v\in C_{m}}\deg_{N_{G_{\ell-i+1}}[x_{\ell-i+1}]}(v)$ we see that
	\[\sum_{v\in C_{m}}\deg_{N_{G_{\ell-i}}[x_{\ell-i}]}(v)  \leq md_{\ell-i}-(m-2)\big(|G_{\ell-i+1}\setminus N_{G_{\ell-i+1}}[x_{\ell-i+1}]|\big)-\frac{m^{i-1}(m-3)}{2^{i-2}}. \]
By \eqref{eqn:extravert} we see that $$(m-2)\big(|G_{\ell-i+1}\setminus N_{G_{\ell-i+1}}[x_{\ell-i+1}]|\big)+\frac{m^{i-1}(m-3)}{2^{i-2}}\geq \frac{m^{i}(m-3)}{2^{i-1}}, $$
and therefore
\[\sum_{v\in C_{m}}\deg_{N_{G_{\ell-i}}[x_{\ell-i}]}(v)  \leq md_{\ell-i}-\frac{m^{i}(m-3)}{2^{i-1}},\] as desired.
	The argument above shows that there are at least $\frac{m^{i}(m-3)}{2^{i}}$ vertices in $G_{\ell-i}\setminus N_{G_{\ell-i}}[x_{\ell-i}]$. Since $G_{\ell-i+1}$ is a subgraph of $N_{G_{\ell-i}}[x_{\ell-i}]$, we see that $$|G_{\ell-i}|-|G_{\ell-i+1}|\geq \frac{m^{i}(m-3)}{2^{i}}.$$ 
Therefore, $$|G_{\ell-i}|\geq \sum_{t=1}^{i}\frac{m^{t}(m-3)}{2^{t}}+m,$$
and by summing the above geometric series we see that
\[ |G_{\ell-i}|\geq\frac{m^{i+1}(m-3)}{2^{i}(m-2)}. \] 
Plugging in $i=\ell$, we see that
\[ n\geq|G_{0}|\geq\frac{m^{\ell+1}(m-3)}{2^{\ell}(m-2)}\geq\frac{p(p+3)^{\ell+1}}{2^{\ell}(p+1)}.\]
 Thus, \[\reg(G)\leq\reg(G_{\ell+1})+\ell+1\leq\log_{\frac{p+3}{2}}\left(\frac{n(p+1)}{p(p+3)}\right)+3,\]
    which gives us the first upper bound.
    
For the second upper bound, we observe that if regularity of $G$ is at least four, then $G$ contains both even and odd holes by Corollary \ref{even,odd holefree}. 
With the same setting above, regularity of $N_{G_{\ell-2}}(x_{\ell-2})$ (or equivalently, $G_{\ell-1}$) is four. Let $m$ be the length of the smallest hole in $N_{G_{\ell-2}}(x_{\ell-2})$. Then $N_{G_{\ell-2}}(x_{\ell-2})$ must also contain a hole of size  $m+2\alpha+1$ for some positive integer $\alpha$. We can now apply the same process as above to bound number of vertices of $G_{\ell-i}$ using this larger hole to obtain
\[ |G_{\ell-i}| \geq \frac{(m+2\alpha+1)^{i}(m+2\alpha-2)}{2^{i-1}(m+2\alpha-1)}, \]
for $2 \leq i \leq \ell$.
By taking $i=\ell$, we see that
\[ n\geq|G_{0}| \geq \frac{(m+2\alpha+1)^{\ell}(m+2\alpha-2)}{2^{\ell-1}(m+2\alpha-1)}\geq\frac{(p+4)^{\ell}(p+1)}{2^{\ell-1}(p+2)}. \]  
Thus, \[\reg(G)\leq\reg(G_{\ell+1})+\ell+1\leq\log_{\frac{p+4}{2}}\frac{n(p+2)}{(p+1)(p+4)}+4.\]
\end{proof}
Note that the former term in the bound in Theorem \ref{thm:holefree} is slightly better (if $n\geq \frac{p+3}{2}$) than the bound in \cite{MR3070118}*{Theorem 4.9} and the former term will be smaller than the latter term if the size of a graph is relatively small. However, the latter term of the bound is better asymptotically. \\

\noindent \textbf{Acknowledgements.} The authors would like to thank Hailong Dao for useful conversations. Grigoriy Blekherman was partially supported by NSF Grants DMS-1352073 and DMS-1901950. Jaewoo Jung was partially supported by NSF Grant DMS-1901950. 

\bibliographystyle{plain}

\begin{bibdiv}
\begin{biblist}

\bib{MR2233847}{article}{
      author={Chudnovsky, Maria},
      author={Robertson, Neil},
      author={Seymour, Paul},
      author={Thomas, Robin},
       title={The strong perfect graph theorem},
        date={2006},
        ISSN={0003-486X},
     journal={Ann. of Math. (2)},
      volume={164},
      number={1},
       pages={51\ndash 229},
         url={https://doi.org/10.4007/annals.2006.164.51},
      review={\MR{2233847}},
}

\bib{MR2292535}{article}{
      author={da~Silva, Murilo V.~G.},
      author={Vu\v{s}kovi\'{c}, Kristina},
       title={Triangulated neighborhoods in even-hole-free graphs},
        date={2007},
        ISSN={0012-365X},
     journal={Discrete Math.},
      volume={307},
      number={9-10},
       pages={1065\ndash 1073},
         url={https://doi.org/10.1016/j.disc.2006.07.027},
      review={\MR{2292535}},
}

\bib{MR3070118}{article}{
      author={Dao, Hailong},
      author={Huneke, Craig},
      author={Schweig, Jay},
       title={Bounds on the regularity and projective dimension of ideals
  associated to graphs},
        date={2013},
        ISSN={0925-9899},
     journal={J. Algebraic Combin.},
      volume={38},
      number={1},
       pages={37\ndash 55},
         url={https://doi.org/10.1007/s10801-012-0391-z},
      review={\MR{3070118}},
}

\bib{MR2159259}{book}{
      author={Diestel, Reinhard},
       title={Graph theory},
     edition={Third},
      series={Graduate Texts in Mathematics},
   publisher={Springer-Verlag, Berlin},
        date={2005},
      volume={173},
        ISBN={978-3-540-26182-7; 3-540-26182-6; 978-3-540-26183-4},
      review={\MR{2159259}},
}

\bib{MR0130190}{article}{
      author={Dirac, G.~A.},
       title={On rigid circuit graphs},
        date={1961},
        ISSN={0025-5858},
     journal={Abh. Math. Sem. Univ. Hamburg},
      volume={25},
       pages={71\ndash 76},
         url={https://doi.org/10.1007/BF02992776},
      review={\MR{0130190}},
}

\bib{MR2188445}{article}{
      author={Eisenbud, David},
      author={Green, Mark},
      author={Hulek, Klaus},
      author={Popescu, Sorin},
       title={Restricting linear syzygies: algebra and geometry},
        date={2005},
        ISSN={0010-437X},
     journal={Compos. Math.},
      volume={141},
      number={6},
       pages={1460\ndash 1478},
         url={https://doi.org/10.1112/S0010437X05001776},
      review={\MR{2188445}},
}

\bib{fakhari2017regularity}{article}{
      author={Fakhari, Seyed Amin~Seyed},
      author={Yassemi, Siamak},
       title={On the regularity of edge ideal of graphs},
        date={2017},
     journal={arXiv preprint arXiv:1705.10226},
}

\bib{MR3199032}{article}{
      author={Fern\'{a}ndez-Ramos, Oscar},
      author={Gimenez, Philippe},
       title={Regularity 3 in edge ideals associated to bipartite graphs},
        date={2014},
        ISSN={0925-9899},
     journal={J. Algebraic Combin.},
      volume={39},
      number={4},
       pages={919\ndash 937},
         url={https://doi.org/10.1007/s10801-013-0473-6},
      review={\MR{3199032}},
}

\bib{MR3213521}{incollection}{
      author={Francisco, Christopher~A.},
      author={Mermin, Jeffrey},
      author={Schweig, Jay},
       title={A survey of {S}tanley-{R}eisner theory},
        date={2014},
   booktitle={Connections between algebra, combinatorics, and geometry},
      series={Springer Proc. Math. Stat.},
      volume={76},
   publisher={Springer, New York},
       pages={209\ndash 234},
         url={https://doi.org/10.1007/978-1-4939-0626-0_5},
      review={\MR{3213521}},
}

\bib{MR2063679}{book}{
      author={Golumbic, Martin~Charles},
       title={Algorithmic graph theory and perfect graphs},
     edition={Second},
      series={Annals of Discrete Mathematics},
   publisher={Elsevier Science B.V., Amsterdam},
        date={2004},
      volume={57},
        ISBN={0-444-51530-5},
        note={With a foreword by Claude Berge},
      review={\MR{2063679}},
}

\bib{MR493395}{article}{
      author={Golumbic, Martin~Charles},
      author={Goss, Clinton~F.},
       title={Perfect elimination and chordal bipartite graphs},
        date={1978},
        ISSN={0364-9024},
     journal={J. Graph Theory},
      volume={2},
      number={2},
       pages={155\ndash 163},
         url={https://doi.org/10.1002/jgt.3190020209},
      review={\MR{493395}},
}

\bib{MR3213523}{incollection}{
      author={H\`a, Huy~T\`ai},
       title={Regularity of squarefree monomial ideals},
        date={2014},
   booktitle={Connections between algebra, combinatorics, and geometry},
      series={Springer Proc. Math. Stat.},
      volume={76},
   publisher={Springer, New York},
       pages={251\ndash 276},
         url={https://doi.org/10.1007/978-1-4939-0626-0_7},
      review={\MR{3213523}},
}

\bib{MR1867354}{book}{
      author={Hatcher, Allen},
       title={Algebraic topology},
   publisher={Cambridge University Press, Cambridge},
        date={2002},
        ISBN={0-521-79160-X; 0-521-79540-0},
      review={\MR{1867354}},
}

\bib{MR0441987}{article}{
      author={Hochster, Melvin},
       title={Cohen-{M}acaulay rings, combinatorics, and simplicial complexes},
        date={1977},
       pages={171\ndash 223. Lecture Notes in Pure and Appl. Math., Vol. 26},
      review={\MR{0441987}},
}

\bib{MR0485485}{article}{
      author={Jungerman, Mark},
      author={Ringel, Gerhard},
       title={The genus of the {$n$}-octahedron: regular cases},
        date={1978},
        ISSN={0364-9024},
     journal={J. Graph Theory},
      volume={2},
      number={1},
       pages={69\ndash 75},
         url={https://doi.org/10.1002/jgt.3190020109},
      review={\MR{0485485}},
}

\bib{MR3408128}{book}{
      author={Kitaev, Sergey},
      author={Lozin, Vadim},
       title={Words and graphs},
      series={Monographs in Theoretical Computer Science. An EATCS Series},
   publisher={Springer, Cham},
        date={2015},
        ISBN={978-3-319-25857-7; 978-3-319-25859-1},
         url={https://doi.org/10.1007/978-3-319-25859-1},
        note={With a foreword by Martin Charles Golumbic},
      review={\MR{3408128}},
}

\bib{MR2790928}{article}{
      author={Moradi, S.},
      author={Kiani, D.},
       title={Bounds for the regularity of edge ideal of vertex decomposable
  and shellable graphs},
        date={2010},
        ISSN={1017-060X},
     journal={Bull. Iranian Math. Soc.},
      volume={36},
      number={2},
       pages={267\ndash 277, 302},
      review={\MR{2790928}},
}

\bib{MR2943752}{article}{
      author={Morales, Marcel},
      author={Yazdan~Pour, Ali~Akbar},
      author={Zaare-Nahandi, Rashid},
       title={The regularity of edge ideals of graphs},
        date={2012},
        ISSN={0022-4049},
     journal={J. Pure Appl. Algebra},
      volume={216},
      number={12},
       pages={2714\ndash 2719},
         url={https://doi.org/10.1016/j.jpaa.2012.03.029},
      review={\MR{2943752}},
}

\bib{MR2932582}{incollection}{
      author={Morey, Susan},
      author={Villarreal, Rafael~H.},
       title={Edge ideals: algebraic and combinatorial properties},
        date={2012},
   booktitle={Progress in commutative algebra 1},
   publisher={de Gruyter, Berlin},
       pages={85\ndash 126},
      review={\MR{2932582}},
}

\bib{MR2739498}{article}{
      author={Nevo, Eran},
       title={Regularity of edge ideals of {$C_4$}-free graphs via the topology
  of the lcm-lattice},
        date={2011},
        ISSN={0097-3165},
     journal={J. Combin. Theory Ser. A},
      volume={118},
      number={2},
       pages={491\ndash 501},
         url={https://doi.org/10.1016/j.jcta.2010.03.008},
      review={\MR{2739498}},
}

\bib{MR1852504}{incollection}{
      author={Parfenoff, I.},
      author={Roussel, F.},
      author={Rusu, I.},
       title={Triangulated neighbourhoods in {$C_4$}-free {B}erge graphs},
        date={1999},
   booktitle={Graph-theoretic concepts in computer science ({A}scona, 1999)},
      series={Lecture Notes in Comput. Sci.},
      volume={1665},
   publisher={Springer, Berlin},
       pages={402\ndash 412},
         url={https://doi.org/10.1007/3-540-46784-X_37},
      review={\MR{1852504}},
}

\bib{MR2560561}{book}{
      author={Peeva, Irena},
       title={Graded syzygies},
      series={Algebra and Applications},
   publisher={Springer-Verlag London, Ltd., London},
        date={2011},
      volume={14},
        ISBN={978-0-85729-176-9},
         url={https://doi.org/10.1007/978-0-85729-177-6},
      review={\MR{2560561}},
}

\bib{MR0349461}{book}{
      author={Ringel, Gerhard},
       title={Map color theorem},
   publisher={Springer-Verlag, New York-Heidelberg},
        date={1974},
        note={Die Grundlehren der mathematischen Wissenschaften, Band 209},
      review={\MR{0349461}},
}

\bib{MR1453579}{book}{
      author={Stanley, Richard~P.},
       title={Combinatorics and commutative algebra},
     edition={Second},
      series={Progress in Mathematics},
   publisher={Birkh\"{a}user Boston, Inc., Boston, MA},
        date={1996},
      volume={41},
        ISBN={0-8176-3836-9},
      review={\MR{1453579}},
}

\bib{MR1417301}{inproceedings}{
      author={Terai, Naoki},
      author={Hibi, Takayuki},
       title={Some results on {B}etti numbers of {S}tanley-{R}eisner rings},
        date={1996},
   booktitle={Proceedings of the 6th {C}onference on {F}ormal {P}ower {S}eries
  and {A}lgebraic {C}ombinatorics ({N}ew {B}runswick, {NJ}, 1994)},
      volume={157},
       pages={311\ndash 320},
         url={https://doi.org/10.1016/S0012-365X(96)83021-4},
      review={\MR{1417301}},
}

\bib{MR2563591}{article}{
      author={Van~Tuyl, Adam},
       title={Sequentially {C}ohen-{M}acaulay bipartite graphs: vertex
  decomposability and regularity},
        date={2009},
        ISSN={0003-889X},
     journal={Arch. Math. (Basel)},
      volume={93},
      number={5},
       pages={451\ndash 459},
         url={https://doi.org/10.1007/s00013-009-0049-9},
      review={\MR{2563591}},
}

\bib{MR1852593}{book}{
      author={White, Arthur~T.},
       title={Graphs of groups on surfaces},
      series={North-Holland Mathematics Studies},
   publisher={North-Holland Publishing Co., Amsterdam},
        date={2001},
      volume={188},
        ISBN={0-444-50075-8},
        note={Interactions and models},
      review={\MR{1852593}},
}

\bib{MR3249840}{article}{
      author={Woodroofe, Russ},
       title={Matchings, coverings, and {C}astelnuovo-{M}umford regularity},
        date={2014},
        ISSN={1939-0807},
     journal={J. Commut. Algebra},
      volume={6},
      number={2},
       pages={287\ndash 304},
         url={https://doi.org/10.1216/JCA-2014-6-2-287},
      review={\MR{3249840}},
}

\end{biblist}
\end{bibdiv}

\end{document}